\newcommand{\pgftextcircled}[1]{
    \setbox0=\hbox{#1}%
    \dimen0\wd0%
    \divide\dimen0 by 2%
    \begin{tikzpicture}[baseline=(a.base)]%
        \useasboundingbox (-\the\dimen0,0pt) rectangle (\the\dimen0,1pt);
        \node[circle,draw,outer sep=0pt,inner sep=0.1ex] (a) {#1};
    \end{tikzpicture}
}
\DeclareMathOperator{\BDM}{\mathbf{DM}}
\DeclareMathOperator{\DM}{DM}
\newcommand\implik{{\ \Longrightarrow\ }}
\newcommand\ekviv{{\ \Longleftrightarrow\ }}
\begin{document}

\title*{Residuated operators and Dedekind-MacNeille completion}
% Use \titlerunning{Short Title} for an abbreviated version of
% your contribution title if the original one is too long
\author{Ivan~Chajda, Helmut~L\"anger and Jan Paseka}
% Use \authorrunning{Short Title} for an abbreviated version of
% your contribution title if the original one is too long
\institute{Ivan Chajda \at 
Palack\'y University Olomouc,  
Faculty of Science,  
Department of Algebra and Geometry, 
17.\ listopadu 12, 
771 46 Olomouc,
Czech Republic, 
\email{ivan.chajda@upol.cz}
\and Helmut L\"anger \at 
TU Wien, 
Faculty of Mathematics and Geoinformation, 
Institute of Discrete Mathematics and Geometry, 
Wiedner Hauptstra\ss e 8-10, 
1040 Vienna, 
Austria, and 
Palack\'y University Olomouc,  
Faculty of Science,  
Department of Algebra and Geometry, 
17.\ listopadu 12, 
771 46 Olomouc,
Czech Republic, 
\email{helmut.laenger@tuwien.ac.at}
\and Jan Paseka \at  
Masaryk University, 
Faculty of Science, 
Department of Mathematics and Statistics, 
Kotl\'a\v rsk\'a 2, 
611 37 Brno, 
Czech Republic,
\email{paseka@math.muni.cz}%
}
%
% Use the package "url.sty" to avoid
% problems with special characters
% used in your e-mail or web address
%
\maketitle

\abstract*{The concept of operator residuation for bounded posets with 
unary operation was introduced by the first two authors. It turns out that in some cases 
when these operators are transformed into lattice terms and the poset 
${\mathbf P}$ is completed into a Dedekind-MacNeille completion $\BDM(\mathbf P)$ 
then the complete lattice $\BDM(\mathbf P)$ becomes a residuated 
lattice with respect to these transformed terms.  It is shown that 
this holds in particular for Boolean posets and for relatively 
pseudocomplemented posets. More complicated situation is with orthomodular 
and pseudo-orthomodular posets. We show which operators $M$ (multiplication) 
and $R$ (residuation) yield operator left-residuation in a 
 pseudo-orthomodular poset ${\mathbf P}$ and if  $\BDM(\mathbf P)$ is  
 an orthomodular lattice then the transformed lattice terms $\odot$ 
 and $\to$ form a left residuation in $\BDM(\mathbf P)$. However, it is 
 a problem to determine when $\BDM(\mathbf P)$ is an orthomodular lattice. 
 We get some classes of  pseudo-orthomodular posets for which 
 their Dedekind-MacNeille completion is  an orthomodular lattice and 
 we introduce the so called strongly $D$-continuous 
 pseudo-orthomodular posets. Finally we prove that, for 
 a  pseudo-orthomodular poset ${\mathbf P}$, the 
 Dedekind-MacNeille completion $\BDM(\mathbf P)$ is  an orthomodular lattice 
 if and only if  ${\mathbf P}$ is strongly $D$-continuous. }

\abstract{The concept of operator residuation for bounded posets with 
unary operation was introduced by the first two authors. It turns out that in some cases 
when these operators are transformed into lattice terms and the poset 
${\mathbf P}$ is completed into a Dedekind-MacNeille completion $\BDM(\mathbf P)$ 
then the complete lattice $\BDM(\mathbf P)$ becomes a residuated 
lattice with respect to these transformed terms. It is shown that 
this holds in particular for Boolean posets and for relatively 
pseudocomplemented posets. More complicated situation is with orthomodular 
and pseudo-orthomodular posets. We show which operators $M$ (multiplication) 
and $R$ (residuation) yield operator left-residuation in a 
 pseudo-orthomodular poset ${\mathbf P}$ and if  $\BDM(\mathbf P)$ is  
 an orthomodular lattice then the transformed lattice terms $\odot$ 
 and $\to$ form a left residuation in $\BDM(\mathbf P)$. However, it is 
 a problem to determine when $\BDM(\mathbf P)$ is an orthomodular lattice. 
 We get some classes of  pseudo-orthomodular posets for which 
 their Dedekind-MacNeille completion is  an orthomodular lattice and 
 we introduce the so called strongly $D$-continuous 
 pseudo-orthomodular posets. Finally we prove that, for 
 a  pseudo-orthomodular poset ${\mathbf P}$, the 
 Dedekind-MacNeille completion $\BDM(\mathbf P)$ is  an orthomodular lattice 
 if and only if  ${\mathbf P}$ is strongly $D$-continuous. 
}

\begin{comment}
Each chapter should be preceded by an abstract (10--15 lines long) that summarizes the content. The abstract will appear \textit{online} at \url{www.SpringerLink.com} and be available with unrestricted access. This allows unregistered users to read the abstract as a teaser for the complete chapter. As a general rule the abstracts will not appear in the printed version of your book unless it is the style of your particular book or that of the series to which your book belongs.\newline\indent
Please use the 'starred' version of the new Springer \texttt{abstract} command for typesetting the text of the online abstracts (cf. source file of this chapter template \texttt{abstract}) and include them with the source files of your manuscript. Use the plain \texttt{abstract} command if the abstract is also to appear in the printed version of the book.
\end{comment}

\section{Introduction}
\label{sec:1}
Consider a bounded poset $\mathbf P=(P,\leq,{}',0,1)$ with a unary operation $'$. For $M\subseteq P$ denote by
\begin{align*}
U(M) & :=\{x\in P\mid y\leq x\text{ for all }y\in M\},
\end{align*}
the so-called {\em upper cone of} $M$, and by 
\begin{align*}
L(M) & :=\{x\in P\mid x\leq y\text{ for all }y\in M\}, 
\end{align*}
the so-called {\em lower cone of} $M$. If $M=\{a,b\}$ or $M=\{a\}$, we will write simply $U(a,b)$, $L(a,b)$ 
or $U(a)$, $L(a)$, respectively.

The following concept was introduced in \cite{CLRepo}. 

\begin{definition}\label{leftresid}{\rm 
An {\em operator left residuated poset} is an ordered seventuple 
$\mathbf P=(P,\leq,$ ${}',M,R,0,$ $1)$ where $(P,\leq,{}',0,1)$ is 
a bounded poset with a unary operation and $M$ and $R$ are 
mappings from $P^2$ to $2^P$ satisfying the following conditions for all $x,y,z\in P$:
\begin{eqnarray}
& & M(x,1)\approx M(1,x)\approx L(x),\label{equ1} \\
%%& & R(x,y)=P\text{ if and only if }x\leq y,\label{equ2} \\
& & M(x,y)\subseteq L(z)\text{ if and only if }L(x)\subseteq R(y,z),\label{equ3} \\
& & R(x,0)\approx L(x').
\end{eqnarray}}
\end{definition}

It is elementary to show that 
$$
R(x,y)=P\text{\, if and only if \,}x\leq y.
$$

In what follows, we will work with posets $\mathbf P=(P,\leq,{}',0,1)$ where $'$ is 
an antitone involution  or a complemetation.  The precise definition is the following. 

\begin{definition}\label{dualcomp}{\rm A {\em poset with antitone involution} is an ordered quintuple 
$\mathbf P=(P,$ $\mbox{$\leq,$ ${}',0,1)$}$ such that $(P,\leq,0,1)$ is a bounded poset and $'$ is 
a unary operation on $P$ satisfying the following conditions for all $x,y\in P$:
\begin{enumerate}[{\rm(i)}]
\item $x\leq y$ implies $y'\leq x'$,
\item $(x')'\approx x$.
\end{enumerate}

A {\em poset with complementation} is a poset with 
antitone involution $\mathbf P=(P,$ \mbox{$\leq,{}',0,1)$}
satisfying the following LU-identities:
\begin{enumerate}
\item[{\rm(iii)}] $L(x,x')\approx\{0\}$ and $U(x,x')\approx\{1\}$.
\end{enumerate}}

A subset $S \subseteq P$ of a   poset  $\mathbf P$ with complementation such 
that $s \leq t'$ for any pair $s, t\in S, s\not= t$ is called {\em orthogonal}.  
$\mathbf P$ is said to have a {\em finite rank} if every orthogonal subset of $\mathbf P$ 
is finite.
\end{definition}

A natural and interesting question is for which posets $\mathbf P=(P,\leq,{}',0,1)$  the operators 
$M$ and $R$ can be constructed by means of the operators $L$ and $U$ similarly 
as in (left) residuated lattices the operations $\odot$ and $\to$ can be expressed as term 
operations.

For reader's convenience we recall that a lattice $\mathbf L=(L,\vee,\wedge,1)$ with the 
greatest element $1$ is {\em left residuated} if there are two binary 
operations $\odot$ and $\to$ on $L$ such that for all $x, y, z\in L$ we have 
\begin{eqnarray}
& &x\odot 1\approx x\approx 1\odot x,\label{requ1} \\
& &x\odot y\leq z\text{\, if and only if \,}x\leq y\to z.\label{requ2}
\end{eqnarray}

In our treaty we do not ask that $\odot$ has to be associative, i.e., 
it need not be a t-norm.
If $\odot$ is commutative then 
we simply say that $\mathbf L$ is {\em residuated}.

It was shown by the first two authors 
in \cite{CLRela} that this is the case for Boolean algebras, 
orthomodular lattices and, as it is familiarly known, for relatively pseudocomplemented 
lattices. 

For every poset $\mathbf P=(P,\leq)$, its Dedekind-MacNeille completion 
$\BDM(\mathbf P)$ is a complete lattice. In what follows, we say that 
an expression in operators $U$ and $L$ is $\BDM$-{\em transformed} 
if every expression $U(x,y)$ or $LU(x,y)$ is substituted by $x\vee y$ and  
every expression $L(x,y)$ is replaced by $x\wedge y$.

The aim of this paper is as follows. Having an operator  left residuated poset 
$\mathbf P=(P,\leq,$ ${}',M,R,0,$ $1)$ we ask whether the 
operators $M$ and $R$ expressed in $U$ and $L$ can be 
$\BDM$-{transformed} such that the resulting expressions will be 
 binary operations $\odot$ and $\to$ on $\BDM(\mathbf P)$ 
 satisfying (\ref{requ1}) and (\ref{requ2}) in the 
  Dedekind-MacNeille completion  $\BDM(\mathbf P)$ of $\mathbf P$.

\section{Dedekind-MacNeille completion}

In this section, we shall discuss  several important classes of 
bounded posets ${\mathbf P}$ with a unary operation which are 
operator residuated or operator left residuated and, 
moreover, the operator residuation from  ${\mathbf P}$  
can be transformed into the  residuation in $\BDM(\mathbf P)$  
by replacing $UL$-terms of ${\mathbf P}$ into lattice terms 
of $\BDM(\mathbf P)$.

We start with detailed definitions of these concepts. 

It is well-known that every poset $(P,\leq)$ can be embedded into a complete lattice ${\mathbf L}$. 
We frequently take the so-called 
Dedekind-MacNeille completion $\BDM(P,\leq)$ for this ${\mathbf L}$. 

Hence, let ${\mathbf P} = (P,\leq)$ be a poset. Put 
$\DM({\mathbf P}):=\{B\subseteq P\mid LU(B)=B\}$. 
(We simply write $LU(B)$ instead of $L(U(B))$. Analogous simplifications are used in the sequel.) 
Then for $\DM({\mathbf P})=\{L(B)\mid B\subseteq P\}$, 
$\BDM({\mathbf P}):=(\DM({\mathbf P}),\subseteq)$ is 
a complete lattice and $x\mapsto L(x)$ is an embedding from 
$\mathbf P$ to $\BDM({\mathbf P})$ 
preserving all existing joins and meets,  and an order isomorphism 
between posets $\mathbf P$ and $(\{L(x)\mid x\in P\},\subseteq)$. 
We usually identify $P$  with $\{ L(x) \mid x\in P\}$.

For subsets $B$ and $C$ of a poset $(P,\leq)$ we will write
$B\leq C$ if and only if  $b\leq c$ for all $b\in B$ and $c\in C$. We write
$b\leq C$ instead of $\{b\}\leq C$ and $B\leq c$ instead of
$B\leq\{c\}$.

It is easy to see that 
if $B, C\subseteq P$ such that $B\leq C$ 
then $\bigvee_{\BDM({\mathbf P})} B=\bigwedge_{\BDM({\mathbf P})} C$ if and only if 
$\{x\in P \mid x\leq C\} \leq \{y\in P \mid B\leq y\}$. 
%The lattice $\BDM(P)$ is called the Dedekind-MacNeille completion of $\mathbf P$.

By Schmidt \cite{Schmidt}
the Dedekind-MacNeille completion of a poset ${\mathbf P}$ is
(up to isomorphism) any complete lattice ${\mathbf L}$ into which ${\mathbf P}$ can be
supremum-densely and infimum-densely embedded (i.e., for every
element $x\in L$ there exist $M,Q\subseteq P$ such that
$x=\bigvee\varphi(M)=\bigwedge\varphi(Q)$,
where $\varphi\colon{}P\to L$ is the embedding).

Let $\mathbf P$ be equipped with a binary operation $*$. We introduce a new operation $\bm*$ on $\DM({\mathbf P})$ as follows:
\[
X\bm*Y:=\bigcap_{a\in X,b\in U(Y)}L(a*b)
\]
for all $X,Y\in\DM({\mathbf P})$.

Recall that a poset $(P,\leq)$ is called 
{\em relatively pseudocomplemented} if for each 
$a,b\in P$ there exists a greatest element $c$ of $P$ 
satisfying $L(a,c)\subseteq L(b)$, see e.g.\ \cite{CLP}. 
This element $c$ is called the {\em relative pseudocomplement} of $a$ 
with respect to $b$ and it is denoted by $a*b$. Every relative pseudocomplemented poset has a greatest element $1$ since $x*x=1$ for every $x\in P$.

The following is known.

\begin{proposition}{\rm\cite[Theorem 3.1]{CLP}}\label{BDMpse} Let 
$\mathbf P=(P,\leq)$ be a poset and $*$ a binary operation on $P$. 
Then the following are equivalent:
\begin{enumerate}
\item[{\rm(i)}] $\mathbf P$ has the top element $1$ and 
$(P,*,1)$ is a relatively pseudocomplemented poset;
\item[{\rm(ii)}] $(\DM(\mathbf P), \bm*, P)$ is a 
relatively pseudocomplemented lattice satisfying the LU-identity 
\[
L(x)\bm*L(y)= L(x*y).
\]
\end{enumerate}
\end{proposition}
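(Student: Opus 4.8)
The plan is to prove the two implications separately, with one computation shared by both: for every $y\in P$ we have $U(L(y))=U(y)$ (the inclusion $\subseteq$ is clear since $y\in L(y)$, and any $w\ge y$ is an upper bound of all of $L(y)$), hence
\[
L(x)\bm*L(y)=\bigcap_{a\in L(x),\,b\in U(y)}L(a*b).
\]
I would first record the monotonicity of $*$ that is immediate from its definition as a relative pseudocomplement: $a'\le a$ forces $L(a',c)\subseteq L(a,c)$ and so $a'*b\ge a*b$, while $b\le b'$ gives $L(b)\subseteq L(b')$ and so $a*b\le a*b'$. Consequently, for $a\le x$ and $b\ge y$ we get $a*b\ge x*b\ge x*y$, so each set $L(a*b)$ in the displayed intersection contains $L(x*y)$; the choice $a=x$, $b=y$ (both admissible) shows $L(x*y)$ is itself one of these sets. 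This already yields the LU-identity $L(x)\bm*L(y)=L(x*y)$, which is then available in both directions of the proof.

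For the implication (i) $\Rightarrow$ (ii): $\BDM(\mathbf P)$ is a complete lattice with top element $P$ and with meet given by intersection, so what remains is to check that, for all $X,Y,Z\in\DM(\mathbf P)$,
\[
X\cap Z\subseteq Y\quad\Longleftrightarrow\quad Z\subseteq X\bm*Y;
\]
together with the observation that $X\bm*Y$, being an intersection of members $L(a*b)$ of $\DM(\mathbf P)$, again lies in $\DM(\mathbf P)$, this says precisely that $X\bm*Y$ is the relative pseudocomplement of $X$ with respect to $Y$. Throughout I use that every $W\in\DM(\mathbf P)$ is a down-set with $W=L(U(W))$. For ``$\Leftarrow$'': if $Z\subseteq X\bm*Y$ and $w\in X\cap Z$, evaluating the defining intersection at $a=w\in X$ gives $w\le w*b$ for every $b\in U(Y)$, whence $w\in L(w,w*b)\subseteq L(b)$ by the defining property of $w*b$; as this holds for all $b\in U(Y)$, we get $w\in L(U(Y))=Y$. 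For ``$\Rightarrow$'': if $X\cap Z\subseteq Y$ and $z\in Z$, $a\in X$, $b\in U(Y)$, then any $w\in L(a,z)$ lies in $X\cap Z$ (both being down-sets), hence in $Y$, hence $w\le b$; thus $L(a,z)\subseteq L(b)$, which by definition of the relative pseudocomplement means $z\le a*b$, and therefore $Z\subseteq\bigcap_{a\in X,b\in U(Y)}L(a*b)=X\bm*Y$.

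For the implication (ii) $\Rightarrow$ (i): in the relatively pseudocomplemented lattice $\BDM(\mathbf P)$ one has $L(x)\bm*L(x)=\top=P$, and combining this with the LU-identity gives $L(x*x)=P$; but $L(c)=P$ forces $c$ to be an upper bound of all of $P$, so $\mathbf P$ has a top element $1:=x*x$. Next fix $a,b\in P$ and put $W:=L(a)\bm*L(b)$, which by the LU-identity equals $L(a*b)$. Since $\bm*$ is a relative pseudocomplementation for the meet $\cap$ on $\DM(\mathbf P)$, we have $L(a)\cap W\subseteq L(b)$, i.e.\ $L(a,a*b)\subseteq L(b)$, so $a*b$ satisfies $L(a,c)\subseteq L(b)$; and if $c\in P$ with $L(a,c)\subseteq L(b)$, i.e.\ $L(a)\cap L(c)\subseteq L(b)$, then $L(c)\subseteq W=L(a*b)$ gives $c\le a*b$. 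Hence $a*b$ is the greatest such $c$, so $(P,*,1)$ is relatively pseudocomplemented.

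The routine parts are the monotonicity of $*$ and the fact that members of $\DM(\mathbf P)$ are $LU$-closed down-sets. The one step needing genuine care is the ``$\Leftarrow$'' direction of the adjunction in (i) $\Rightarrow$ (ii): there one must exploit the explicit form of $X\bm*Y$ (instantiating the defining intersection at $a=w$) rather than merely invoking the LU-identity, since a priori $\bm*$ is defined on all of $\DM(\mathbf P)$ and is not determined by its values on the principal ideals $L(x)$. The degenerate cases — an empty $X$, or a poset without a least element so that an empty intersection must be read as $P$ — are harmless and can be disposed of in a sentence.
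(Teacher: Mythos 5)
Your proof is correct. The paper itself gives no argument for this proposition (it is quoted from \cite[Theorem 3.1]{CLP}), but your derivation is the natural one: the monotonicity of $*$ reduces the intersection defining $L(x)\bm{*}L(y)$ to its single term $L(x*y)$, and the adjunction $X\cap Z\subseteq Y\Leftrightarrow Z\subseteq X\bm{*}Y$ — with the key instantiation $a=w$ in the ``$\Leftarrow$'' direction, which you rightly flag as the only step that cannot be read off from the LU-identity alone — establishes that $\bm{*}$ is the relative pseudocomplementation on $\DM(\mathbf P)$; the converse direction is a routine restriction to principal ideals.
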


Recall that a {poset} $\mathbf P$ is  {\em distributive} if 
it satisfies one of the following equivalent identities:
\begin{align*}
L(U(x,y),z) & \approx LU(L(x,z),L(y,z)), \\
U(L(x,y),z) & \approx UL(U(x,z),U(y,z)).
\end{align*}

A bounded poset $\mathbf P=(P,\leq,$ ${}',0,1)$ is called {\em Boolean} if 
it is a  distributive poset and $'$ is the complementation.

\begin{example}
Fig.\ 1 shows two Boolean posets which are not Boolean algebras.

%\vspace*{1.4mm}
\begin{figure}[htbp]\centering
\vspace*{1.4mm}
\[
\setlength{\unitlength}{7mm}
\begin{picture}(6,8)
\put(3,0){\circle*{.3}}
\put(0,2){\circle*{.3}}
\put(2,2){\circle*{.3}}
\put(4,2){\circle*{.3}}
\put(6,2){\circle*{.3}}
\put(0,4){\circle*{.3}}
\put(2,4){\circle*{.3}}
\put(4,4){\circle*{.3}}
\put(6,4){\circle*{.3}}
\put(0,6){\circle*{.3}}
\put(2,6){\circle*{.3}}
\put(4,6){\circle*{.3}}
\put(6,6){\circle*{.3}}
\put(3,8){\circle*{.3}}
\put(3,0){\line(-3,2)3}
\put(3,0){\line(-1,2)1}
\put(3,0){\line(1,2)1}
\put(3,0){\line(3,2)3}
\put(3,8){\line(-3,-2)3}
\put(3,8){\line(-1,-2)1}
\put(3,8){\line(1,-2)1}
\put(3,8){\line(3,-2)3}
\put(0,2){\line(0,1)4}
\put(6,2){\line(0,1)4}
\put(0,4){\line(1,1)2}
\put(0,2){\line(1,1)4}
\put(2,2){\line(1,1)4}
\put(4,2){\line(1,1)2}
\put(2,2){\line(-1,1)2}
\put(4,2){\line(-1,1)4}
\put(6,2){\line(-1,1)4}
\put(6,4){\line(-1,1)2}
\put(2.85,-.75){$0$}
\put(-.6,1.9){$a$}
\put(1.2,1.9){$b$}
\put(4.45,1.9){$c$}
\put(6.4,1.9){$d$}
\put(-.6,3.9){$e$}
\put(1.2,3.9){$f$}
\put(4.45,3.9){$f'$}
\put(6.4,3.9){$e'$}
\put(-.7,5.9){$d'$}
\put(1.2,5.9){$c'$}
\put(4.45,5.9){$b'$}
\put(6.4,5.9){$a'$}
\put(2.85,8.4){$1$}
\put(2.6,-1.5){$(a)$}
\end{picture}
\quad\quad\quad\quad
\setlength{\unitlength}{7mm}
\begin{picture}(6,8)
\put(3,0){\circle*{.3}}
\put(0,2){\circle*{.3}}
\put(2,2){\circle*{.3}}
\put(4,2){\circle*{.3}}
\put(6,2){\circle*{.3}}
\put(0,4){\circle*{.3}}
\put(6,4){\circle*{.3}}
\put(0,6){\circle*{.3}}
\put(2,6){\circle*{.3}}
\put(4,6){\circle*{.3}}
\put(6,6){\circle*{.3}}
\put(3,8){\circle*{.3}}
\put(3,0){\line(-3,2)3}
\put(3,0){\line(-1,2)1}
\put(3,0){\line(1,2)1}
\put(3,0){\line(3,2)3}
\put(3,8){\line(-3,-2)3}
\put(3,8){\line(-1,-2)1}
\put(3,8){\line(1,-2)1}
\put(3,8){\line(3,-2)3}
\put(0,2){\line(0,1)4}
\put(6,2){\line(0,1)4}
\put(0,4){\line(1,1)2}
\put(0,2){\line(1,1)4}
\put(2,2){\line(1,1)4}
\put(4,2){\line(1,1)2}
\put(2,2){\line(-1,1)2}
\put(4,2){\line(-1,1)4}
\put(6,2){\line(-1,1)4}
\put(6,4){\line(-1,1)2}
\put(2.85,-.75){$0$}
\put(-.6,1.9){$a$}
\put(1.2,1.9){$b$}
\put(4.45,1.9){$c$}
\put(6.4,1.9){$d$}
\put(-.6,3.9){$e$}
\put(6.4,3.9){$e'$}
\put(-.7,5.9){$d'$}
\put(1.2,5.9){$c'$}
\put(4.45,5.9){$b'$}
\put(6.4,5.9){$a'$}
\put(2.85,8.4){$1$}
\put(2.6,-1.5){$(b)$}
\put(-2,-2){{\rm Fig.\ 1}}
\end{picture}
\]
\vspace*{9mm}
\end{figure}
%\vspace*{9mm}

\end{example}

The following result was proved by Niederle \cite{Niederle}.

\begin{proposition}{\rm \cite[Theorem 16]{Niederle}}\label{niederle}  
For every Boolean poset $\mathbf P=(P,\leq, {}',0,1)$  its 
Dedekind-MacNeille completion $\BDM(\mathbf P)$ is a complete Boolean 
algebra.
\end{proposition}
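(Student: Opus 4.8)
The plan is to turn $\BDM(\mathbf P)$ into a complete Boolean algebra in three stages. First I would extend the complementation $'$ of $\mathbf P$ to an operation ${}^{\perp}$ on $\BDM(\mathbf P)$ and check that ${}^{\perp}$ is an antitone involution which is a complementation, so that $\BDM(\mathbf P)$ becomes a complete ortholattice. Next I would establish the auxiliary fact that in a Boolean poset $x\le y$ holds if and only if $L(x,y')=\{0\}$. Finally I would use this fact, together with the complementation just built, to prove that $\BDM(\mathbf P)$ is distributive; since a complete distributive lattice in which every element has a complement is a complete Boolean algebra (the complement being unique), this finishes the proof. Throughout I use what the excerpt already records: $\BDM(\mathbf P)$ is a complete lattice into which $\mathbf P$ embeds via $x\mapsto L(x)$ both supremum- and infimum-densely, its meets are intersections and its joins are $LU$ of unions, and every member of $\DM(\mathbf P)$ is a down-set containing $0$.

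For the first stage, for $B\in\DM(\mathbf P)$ set $B^{\perp}:=\{u'\mid u\in U(B)\}$. The De Morgan rules for cones valid under an antitone involution — $(U(M))'=L(M')$ and $(L(M))'=U(M')$, where $M':=\{m'\mid m\in M\}$ — give $B^{\perp}=L(\{b'\mid b\in B\})$, so $B^{\perp}\in\DM(\mathbf P)$ and $\bigl(L(x)\bigr)^{\perp}=L(x')$, i.e.\ ${}^{\perp}$ genuinely extends $'$. That ${}^{\perp}$ is antitone is immediate from monotonicity of $U$ and antitonicity of $'$; the involutive law $(B^{\perp})^{\perp}=B$ reduces to $U\bigl(L(\{b'\mid b\in B\})\bigr)=\{b'\mid b\in B\}$, which holds because $B=LU(B)$. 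Finally ${}^{\perp}$ is a complementation: any $t\in B\cap B^{\perp}$ satisfies $t\le t'$, hence lies in $L(t,t')=\{0\}$ by the complementation identity, so $B\wedge B^{\perp}=\{0\}$; dually any $z\in U(B)\cap U(B^{\perp})$ satisfies $z\ge z'$, hence lies in $U(z,z')=\{1\}$, so $B\vee B^{\perp}=L(\{1\})=P$. Thus $\BDM(\mathbf P)$ is a complete ortholattice, and in particular De Morgan gives $(Y\vee Z)^{\perp}=Y^{\perp}\cap Z^{\perp}$ for all $Y,Z\in\DM(\mathbf P)$.

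The auxiliary fact is proved thus: if $x\le y$ then $L(x,y')\subseteq L(y,y')=\{0\}$; conversely, $U(y,y')=\{1\}$ gives $L(x)=L(U(y,y'),x)$, which by the distributivity identity of $\mathbf P$ equals $LU\bigl(L(x,y),L(x,y')\bigr)$, and if $L(x,y')=\{0\}$ the second term drops out, leaving $L(x)=L(x,y)$, i.e.\ $x\le y$. This is exactly where both the complementation and the distributivity of $\mathbf P$ enter, and it is the reason the completion turns out distributive although the Dedekind--MacNeille completion of a merely distributive poset need not be. I expect the distributivity step — and in particular the realization that one must route through this characterization and the complement on $\BDM(\mathbf P)$ rather than trying to lift the distributivity identity of $\mathbf P$ directly by density — to be the main obstacle.

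For the third stage, fix $X,Y,Z\in\DM(\mathbf P)$; since $(X\wedge Y)\vee(X\wedge Z)\le X\wedge(Y\vee Z)$ always holds, it suffices to show $X\cap(Y\vee Z)\subseteq LU\bigl((X\cap Y)\cup(X\cap Z)\bigr)$. Let $z\in X\cap(Y\vee Z)$ and let $w$ be any upper bound of $(X\cap Y)\cup(X\cap Z)$; I must show $z\le w$, and by the auxiliary fact it is enough to show $L(z,w')=\{0\}$. So let $t\le z$ with $t\le w'$. As $z$ lies in the down-sets $X$ and $Y\vee Z$, so does $t$. From $w\le t'$ and $w\ge p$ for every $p\in X\cap Y$ we get $p\le t'$ for every such $p$; hence if $s\le t$ and $s\in Y$, then $s\in X\cap Y$ (because $t\in X$), so $s\le t'$, and therefore $s\in L(t,t')=\{0\}$. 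Thus $L(t)\cap Y=\{0\}$, so $L(t,b)=\{0\}$ for every $b\in Y$, and the auxiliary fact yields $t\le b'$ for every $b\in Y$, i.e.\ $t\in Y^{\perp}$; symmetrically $t\in Z^{\perp}$. Then $t\in Y^{\perp}\cap Z^{\perp}=(Y\vee Z)^{\perp}$, and together with $t\in Y\vee Z$ the complementation law forces $t\in(Y\vee Z)\cap(Y\vee Z)^{\perp}=\{0\}$; hence $t=0$, $L(z,w')=\{0\}$, and the required inclusion holds. Therefore $\BDM(\mathbf P)$ is distributive, and being a complete complemented distributive lattice it is a complete Boolean algebra whose complementation ${}^{\perp}$ restricts on $\mathbf P$ to $'$.
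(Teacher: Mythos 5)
Your proof is correct as far as I can check, but note that the paper does not prove this statement at all: it is quoted verbatim from Niederle (Theorem 16 of \emph{Boolean and distributive ordered sets}), so there is no internal argument to compare against. Your three-stage argument is a legitimate self-contained proof. Stage 1 (the orthocomplementation $B^{\perp}=(U(B))'=L(B')$ on $\DM(\mathbf P)$, with $(B^{\perp})^{\perp}=B$ coming from $LU(B)=B$ and the complement laws coming from $L(x,x')=\{0\}$, $U(x,x')=\{1\}$) is standard and is essentially the argument the paper alludes to in Remark~\ref{ddcd} via MacLaren for general complemented posets. The genuinely substantive part is your Stage 2 characterization $x\le y\iff L(x,y')=\{0\}$, obtained from $L(x)=L(U(y,y'),x)=LU(L(x,y),L(x,y'))$, and its use in Stage 3 to push distributivity up to the completion; this is where the distributivity identity of $\mathbf P$ is actually consumed, and your remark that one cannot simply ``lift'' the identity by density is well taken (the MacNeille completion of a distributive poset need not be distributive, so the complementation must be used). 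The chain in Stage 3 — $t\le z$, $t\le w'$ forces $L(t)\cap Y=\{0\}$, hence $t\in Y^{\perp}$, symmetrically $t\in Z^{\perp}$, hence $t\in(Y\vee Z)^{\perp}\cap(Y\vee Z)=\{0\}$ — is sound, each step using only facts you established (that members of $\DM(\mathbf P)$ are down-sets, that meets are intersections, and De Morgan for the antitone involution). The conclusion via uniqueness of complements in a distributive lattice is standard. I found no gap.
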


Unfortunately, for other interesting classes of posets we do not have 
such a nice result. A {\em poset}  with complementation  $\mathbf P=(P,\leq,{}',0,1)$  
is called {\em orthomodular} if for all $x, y\in P$ with 
$x\leq y'$ there exists $x\vee y$ and then $\mathbf P$ satisfies 
 one of the following equivalent identities:
\begin{align*}
((x\wedge y)\vee y')\wedge y & \approx x\wedge y, \\
((x\vee y)\wedge y')\vee y & \approx x\vee y
\end{align*}
where $x\wedge y$ stands for $(x'\vee y')'$ 
(De Morgan laws). 

It is known that for an  orthomodular poset  $\mathbf P=(P,\leq,{}',0,1)$, 
its Dedekind-MacNeille completion $\BDM(\mathbf P)$  need not be an 
 orthomodular  lattice. 
 
Recall that a lattice  with complementation 
$(L,\wedge, \vee, {}',0,1)$ is orthomodular  if and only if 
 it satisfies the following 
identity \cite[Theorem II.5.1]{beran}:
\begin{align*}
x\vee y & \approx  ((x\vee y)\wedge y')\vee y.
\end{align*}
which in turn is equivalent to the following condition (\cite[Chapter 1, 2. Theorem]{kalmb83}): 
\begin{center}
if $x, y\in L$, $x\leq y$ and $x'\wedge y=0$ then $x=y$.
\end{center}

The poset $\mathbf P$ with complementation is called an {\em orthocomplete poset} 
if $\bigvee S$ exists in  $\mathbf P$ for every orthogonal subset $S \subseteq P$.
 
 The poset $\mathbf P$ with complementation is called 
a {\em pseudo-orthomodular poset}\/ if it satisfies one of the following equivalent 
conditions:
\begin{align*}
L(U(L(x,y),y'),y) & \approx L(x,y), \\
U(L(U(x,y),y'),y) & \approx U(x,y).
\end{align*}

It is worth noticing that if the previous expressions are 
$\BDM$-{transformed} we obtain the orthomodular  law which holds 
in orthomodular lattices. Unfortunately, if 
$\mathbf P=(P,\leq,$ ${}',0,1)$ is a  pseudo-orthomodular poset then its 
Dedekind-MacNeille completion $\BDM(\mathbf P)$  need not be an 
 orthomodular  lattice. 
 
 Of course, every Boolean poset is pseudo-orthomodular and every 
 orthomodular lattice is a pseudo-orthomodular poset.
 
 We can state and prove the following result.
 
 \begin{theorem}\label{boolpos} Let $\mathbf P=(P,\leq, {}',0,1)$ be a 
  Boolean poset. Take $M(x,y)=L(x,y)$ and $R(x,y)=L(U(x',y))$. 
  Then
  \begin{enumerate}[{\rm(i)}]
  \item $\mathbf P$ is operator residuated with respect to $M$ and $R$;
  \item $\BDM(\mathbf P)$ is a complete Boolean algebra which is a residuated 
  lattice with respect to the operations $\odot$ and $\to$ reached by the 
  $\BDM$-transformation from $M$ and $R$, respectively, i.e., 
  $x\odot y=x\wedge y$ and $x\to y=x'\vee y$. 
  \end{enumerate}
 \end{theorem}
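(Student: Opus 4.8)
For part (i) I would verify the three defining conditions of an operator residuated poset directly, using only that $\mathbf P$ is a distributive poset with complementation. Condition \eqref{equ1} is immediate: $M(x,1)=L(x,1)=L(x)$ since $1$ is the top element, and likewise $M(1,x)=L(x)$. For the last condition, $R(x,0)=L(U(x',0))=L(U(x'))=L(x')$, again using that $0$ is the bottom. The heart of part (i) is the adjunction \eqref{equ3}: I must show $L(x,y)\subseteq L(z)$ iff $L(x)\subseteq L(U(y',z))$. Unwinding the first inclusion, $L(x,y)\subseteq L(z)$ means $z\in U(L(x,y))$, i.e.\ $z$ is an upper bound of the lower cone of $\{x,y\}$; in a poset this is exactly saying $z\in UL(x,y)$. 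The second inclusion $L(x)\subseteq L(U(y',z))$ says $x\in LU(x)\subseteq L(U(y',z))$, i.e.\ $x\leq w$ for every common upper bound $w$ of $y'$ and $z$, which is $x\in L(U(y',z))$. So I need the $UL$-identity $x\in L(U(y',z)) \Longleftrightarrow z\in UL(x,y)$. This is where Boolean-ness enters: using the distributive identity $L(U(x,z),y)\approx LU(L(x,y),L(z,y))$ together with $L(y,y')=\{0\}$, $U(y,y')=\{1\}$, one shows that $y'$ behaves as a complement of $y$ well enough to get the De Morgan-type manipulation $L(U(y',z)) = $ ``$y'\vee z$ in the completion sense'' and dually. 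I would first prove the pointwise LU-identity and then read off \eqref{equ3}. The same computation underlies the known fact (cited above as \cite{CLRela}) that Boolean algebras are operator residuated, and I would essentially transcribe that argument into the poset/$UL$-language, checking each step survives without joins and meets existing.

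**Part (ii).** By Proposition~\ref{niederle}, $\BDM(\mathbf P)$ is a complete Boolean algebra, with the operation $'$ extended so that $L(x)'=L(x')$ on the image of $P$; I would use this without further comment. It remains to compute the $\BDM$-transforms of $M$ and $R$ and check \eqref{requ1} and \eqref{requ2}. The transform of $M(x,y)=L(x,y)$ is, by the stated convention ($L(x,y)\rightsquigarrow x\wedge y$), simply $x\odot y = x\wedge y$. The transform of $R(x,y)=L(U(x',y))$: here $U(x',y)$ is the upper cone of the two-element set, so $LU(x',y)\rightsquigarrow x'\vee y$ under the convention ($LU(x,y)\rightsquigarrow x\vee y$); hence $x\to y = x'\vee y$. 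With these explicit operations, \eqref{requ1} is trivial in a Boolean algebra: $x\wedge 1 = x = 1\wedge x$. And \eqref{requ2} is the classical Boolean fact $x\wedge y\leq z \Longleftrightarrow x\leq y'\vee z$, which follows by one application of distributivity and complementation in the complete Boolean algebra $\BDM(\mathbf P)$. Commutativity of $\odot=\wedge$ is immediate, so $\BDM(\mathbf P)$ is residuated, not merely left-residuated.

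**Main obstacle.** The routine parts — \eqref{requ1}, \eqref{requ2}, and the identification of the transformed operations — are essentially free once Proposition~\ref{niederle} is invoked. The real work is the $UL$-level adjunction \eqref{equ3} in part (i), because it must be done entirely inside the poset $\mathbf P$, where $x'\vee z$ and $x'\wedge z$ may simply not exist as elements, so every ``join/meet'' step has to be replaced by a cone manipulation and justified from the distributive LU-identities plus $L(x,x')=\{0\}$, $U(x,x')=\{1\}$. In particular I expect the subtle point to be verifying that $L(U(y',z))$ really is the correct cone-theoretic surrogate for the relative pseudocomplement-like expression ``$y\to z$'' — i.e.\ that $z\in UL(x,y)$ forces $x\le w$ for all $w\in U(y',z)$ and conversely — which amounts to a careful double application of poset distributivity. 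Once that identity is in hand, both directions of \eqref{equ3} drop out, and a short remark transfers the conclusion to $\BDM(\mathbf P)$ via the fact that the embedding $x\mapsto L(x)$ preserves the relevant cones.
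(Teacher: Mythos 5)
Your proposal is correct, but it takes a genuinely different route from the paper: the paper's proof of (i) is a one-line citation to \cite{CLRepo}, and for (ii) it invokes Proposition~\ref{niederle} together with the ``well-known'' residuation of Boolean algebras, whereas you reconstruct the verification from scratch. Your reductions of \eqref{equ1} and of $R(x,0)\approx L(x')$ are fine, and your reformulation of \eqref{equ3} as the cone identity $z\in UL(x,y)\Leftrightarrow x\in LU(y',z)$ is exactly the right target. The one place you leave only a sketch --- the ``careful double application of poset distributivity'' --- does go through: for the direction $x\in LU(y',z)\Rightarrow z\in UL(x,y)$, any $v\in L(x,y)$ satisfies $v\in LU(y',z)\cap L(y)=L(U(y',z),y)=LU(L(y',y),L(z,y))=LU(L(z,y))=L(z,y)\subseteq L(z)$, using $L(y,y')=\{0\}$; for the converse, $L(x)=L(U(y,y'),x)=LU(L(x,y),L(x,y'))\subseteq L(w)$ for every $w\in U(y',z)$, since $L(x,y)\subseteq L(z)\subseteq L(w)$ and $L(x,y')\subseteq L(y')\subseteq L(w)$, whence $x\le w$. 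So the distributive LU-identity together with $U(y,y')=\{1\}$ and $L(y,y')=\{0\}$ is indeed all that is needed, confirming your diagnosis of where Boolean-ness enters. Part (ii) is handled essentially the same way in both proofs (Niederle's theorem plus the classical adjunction $x\wedge y\le z\Leftrightarrow x\le y'\vee z$ in the complete Boolean algebra). What your approach buys is a self-contained proof of (i) inside the poset; what the paper's approach buys is brevity at the cost of deferring all content to \cite{CLRepo}.
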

 \begin{proof} \smartqed
 (i) is proved in \cite{CLRepo}, the first part of (ii) is shown by Proposition 
 \ref{niederle}, the $\BDM$-transformation is evident and the fact that 
 $\BDM(\mathbf P)$ is a residuated 
  lattice with respect to the operations $\odot$ and $\to$ is well-known. 
  \qed
 \end{proof}
 
 Similar results can be stated for relatively pseudocomplemented posets.
 
 Recall that a lattice $\mathbf  L = (L, \vee, \wedge)$ is 
 relatively pseudocomplemented if for each $a,b \in L$ 
 there exists the greatest element of the set $\{x \in L\mid  a \wedge  x \leq b\}$, 
 the so-called {\em relative pseudocomplement of $a$ with respect to $b$}; 
 it is denoted by $a*b$. Evidently,
$$       a \wedge b  \leq c\   \text{ if and only if }\   a \leq b*c. $$
 
  \begin{theorem}\label{relpspos} Let $(P,\leq, *,0,1)$ be a 
  relatively pseudocomplemented  poset. 
  Take $x'=x*0$, $M(x,y)=L(x,y)$ and $R(x,y)=L(x*y)$. 
  Then
  \begin{enumerate}[{\rm(i)}]
  \item $\mathbf P=(P,\leq,\, {}',M,R,0,1)$ is operator residuated;
  \item $\BDM(\mathbf P)$ is a complete relatively pseudocomplemented 
  lattice which is a residuated 
  lattice with respect to the operations $\odot$ and $\to$ reached by the 
  $\BDM$-transformation from $M$ and $R$, respectively, i.e., 
  $x\odot y=x\wedge y$ and $x\to y=x\bm{*} y$. 
  \end{enumerate}
 \end{theorem}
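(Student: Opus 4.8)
The plan is to mimic the proof of Theorem~\ref{boolpos} and reduce everything to facts already at our disposal. For part~(i), I would verify the three axioms of Definition~\ref{leftresid} directly (and in fact the symmetric version for operator \emph{residuation}). Axiom~\eqref{equ1}, $M(x,1)=M(1,x)=L(x)$, is immediate since $L(x,1)=L(x)$ in any bounded poset. Axiom $R(x,0)=L(x')$ holds by definition, because $x'=x*0$ and hence $L(x*0)=L(x')$. The crux is the adjunction~\eqref{equ3}: $M(x,y)\subseteq L(z)$ iff $L(x)\subseteq R(y,z)$, i.e.\ $L(x,y)\subseteq L(z)$ iff $L(x)\subseteq L(y*z)$. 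Unpacking, $L(x,y)\subseteq L(z)$ says that every lower bound of $\{x,y\}$ is below $z$; by the poset relative-pseudocomplement definition (greatest $c$ with $L(y,c)\subseteq L(z)$) this is exactly $x\le y*z$, which in turn is exactly $L(x)\subseteq L(y*z)$. So part~(i) is a short unwinding of definitions, and commutativity of $M$ is clear since $L(x,y)=L(y,x)$.

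For part~(ii), the first assertion — that $\BDM(\mathbf P)$ is a complete relatively pseudocomplemented lattice together with the LU-identity $L(x)\bm* L(y)=L(x*y)$ — is precisely the implication (i)$\Rightarrow$(ii) of Proposition~\ref{BDMpse}, so I would just cite it. It remains to identify the $\BDM$-transformed operations. The transformation rule replaces $L(x,y)$ by $x\wedge y$, so $M(x,y)=L(x,y)$ transforms to $x\odot y = x\wedge y$; and $R(x,y)=L(x*y)$ transforms to $x\to y = L(x)\bm* L(y) = x\bm* y$ using the LU-identity from Proposition~\ref{BDMpse} (after the identification of $P$ with $\{L(x)\mid x\in P\}$). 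Finally, that a complete relatively pseudocomplemented lattice is residuated with respect to $x\odot y=x\wedge y$ and $x\to y = x\bm* y$ is the standard fact recalled right before the theorem: $x\wedge 1=x=1\wedge x$ gives~\eqref{requ1}, and $x\wedge y\le z \iff x\le y\bm* z$ gives~\eqref{requ2}; commutativity of $\wedge$ gives the residuated (not merely left-residuated) conclusion.

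I do not expect a genuine obstacle here: the theorem is essentially a corollary obtained by gluing Proposition~\ref{BDMpse} to the elementary observation about the $\BDM$-transformation, exactly parallel to how Theorem~\ref{boolpos} was assembled from Proposition~\ref{niederle}. The only point requiring a little care is bookkeeping the identification of $\mathbf P$ with its image $\{L(x)\mid x\in P\}$ inside $\BDM(\mathbf P)$, so that the equalities $x\odot y=x\wedge y$ and $x\to y=x\bm* y$ are read at the level of principal ideals; once that identification is made explicit, the LU-identity $L(x)\bm* L(y)=L(x*y)$ does all the work. Thus the write-up will be just a few lines, citing \cite{CLRepo} or the definitions for~(i), Proposition~\ref{BDMpse} for the lattice structure, and the displayed pre-theorem remarks for residuation.
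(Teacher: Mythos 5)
Your proposal is correct and follows essentially the same route as the paper, which likewise dispatches part~(i) via the chain $L(x,y)\subseteq L(z)\Leftrightarrow L(x)\subseteq L(y*z)$ coming from the definition of the relative pseudocomplement, and part~(ii) by citing Proposition~\ref{BDMpse} together with the standard residuation of relatively pseudocomplemented lattices. The paper in fact gives only this sketch ("every of these assertions is familiarly known"), so your slightly more explicit unwinding of the adjunction and of the identification of $P$ with $\{L(x)\mid x\in P\}$ is, if anything, a more complete write-up of the same argument.
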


We need not get a proof because every of these assertions is 
familiarly known. Namely, 
\begin{eqnarray*}
M(x,y)\subseteq L(z) &\Longleftrightarrow& L(x,y) \subseteq L(z)  \Longleftrightarrow 
L(x)\subseteq L(y*z) \Longleftrightarrow L(x)\subseteq  R(y,z). 
%%R(x,0)=L(x*0)=L(x')\ \text{and}\ M(x,1)=L(x,1)=L(x)=L(1,x)=M(1,x). 
\end{eqnarray*}
It was shown by the authors in \cite{CLP} that the pseudocomplementation 
$*$ in  $\BDM(\mathbf P)$  for elements from $P$ is the same as in 
$\mathbf P$. 

\section{Completion of pseudo-orthomodular posets}

As mentioned above, the lattice $\BDM(\mathbf P)$ for a pseudo-orthomodular poset 
$\mathbf P=(P,\leq, {}',0,1)$ need not be an orthomodular lattice. It was shown 
in \cite{CLRepo} that  for $M(x,y)=L(U(x,y'),y)$ and $R(x,y)=L(U(L(x,y),x'))$,  
$\mathbf P$ becomes an operator left residuated poset. Unfortunately, 
making $\BDM$-transformation of $M$ and $R$, the 
Dedekind-MacNeille completion $\BDM(\mathbf P)$  need not be a left 
residuated lattice with respect to $x\odot y=(x\vee y')\wedge y$ and 
$x\to y=(x\wedge y)\vee x'$ despite the fact that every orthomodular lattice 
is left residuated with respect to these operations. 

The aim of this section is to show some cases of posets $\mathbf P$ 
for which $\BDM(\mathbf P)$ is 
an orthomodular lattice and when $\BDM$-transformation of $M$ and $R$ 
yields operations  $\odot$ and $\to $ such that $\BDM(\mathbf P)$ 
is  a left residuated lattice.

The horizontal sum of a family of bounded posets is obtained
from their disjoint union by identifying the top elements and the bottom elements, respectively. Note that 
a horizontal sum of a family of bounded posets with 
antitone involution (complementation) is a bounded poset with antitone involution 
(complementation), respectively.

\begin{proposition}\label{dmhosum}Let $\mathbf P=(P,\leq,0,1)$ 
be  a bounded poset such that 
$\mathbf P$ is a horizontal sum of bounded 
posets $\mathbf P_{\alpha}=(P_{\alpha},\leq_{\alpha},0,1)$, 
$\alpha\in\Lambda$. 
Then $\BDM({\mathbf P})$ is order-isomorphic to  a horizontal sum $\mathbf Q$ of 
complete lattices  
$\BDM({\mathbf P_{\alpha}})$, $\alpha\in\Lambda$.  
\end{proposition}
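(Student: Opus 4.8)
The plan is to describe the elements of $\DM(\mathbf P)$ explicitly and to match the resulting ordered set with the horizontal sum of the $\DM(\mathbf P_\alpha)$. Throughout I identify each $P_\alpha$ with its copy $\{0,1\}\cup(P_\alpha\setminus\{0,1\})$ inside $P$, in such a way that the order of $\mathbf P$ restricted to $P_\alpha$ is $\leq_\alpha$, that $P_\alpha\cap P_\beta=\{0,1\}$ for $\alpha\neq\beta$, and that an element of $P_\alpha\setminus\{0,1\}$ is incomparable in $\mathbf P$ with every element of $P_\beta\setminus\{0,1\}$ whenever $\beta\neq\alpha$. Write $U_\alpha,L_\alpha$ for the cone operators computed inside $\mathbf P_\alpha$.

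First I would record two elementary facts about an arbitrary $B\in\DM(\mathbf P)$, i.e.\ $B=LU(B)$: since $1\in U(B)$ and $0$ lies below everything, $0\in B$; and if $1\in B$ then $U(B)=\{1\}$, hence $B=L(1)=P$. So the \emph{proper} closed sets, those with $\{0\}\subsetneq B\subsetneq P$, all satisfy $0\in B$ and $1\notin B$, whence $B\setminus\{0\}\subseteq\bigcup_\alpha(P_\alpha\setminus\{0,1\})$. I would then show that such a $B$ lies in a single summand: if $a\in B\cap(P_\alpha\setminus\{0,1\})$ and $b\in B\cap(P_\beta\setminus\{0,1\})$ with $\alpha\neq\beta$, then, using $U(a)=U_\alpha(a)\subseteq P_\alpha$, $U(b)=U_\beta(b)\subseteq P_\beta$, $P_\alpha\cap P_\beta=\{0,1\}$ and $0\notin U(a)$, one gets $U(B)=\{1\}$ and again $B=P$, a contradiction. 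Hence there is a unique $\alpha$ with $B\subseteq P_\alpha$.

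The \emph{crux} is comparing the closure operators of $\mathbf P$ and $\mathbf P_\alpha$. For a nonempty $C\subseteq P_\alpha\setminus\{0,1\}$ one has $U(C)=U_\alpha(C)$ and $0\notin U_\alpha(C)$. If $U_\alpha(C)=\{1\}$, then $LU(C)=L(1)=P$. Otherwise $U_\alpha(C)$ contains some $d_0\in P_\alpha\setminus\{0,1\}$; splitting the intersections $\bigcap_{d\in U_\alpha(C)}L(d)$ and $\bigcap_{d\in U_\alpha(C)}L_\alpha(d)$ according to whether $d=1$ or $d\in P_\alpha\setminus\{0,1\}$, and using $L(d)=L_\alpha(d)\subseteq P_\alpha$ for the latter together with $L(1)=P\supseteq P_\alpha=L_\alpha(1)$, both intersections collapse to $\bigcap_{d\in U_\alpha(C)\setminus\{1\}}L_\alpha(d)$; therefore $LU(C)=L_\alpha U_\alpha(C)$. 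Applying this with $C=B\setminus\{0\}$, and noting $U_\alpha(\{0\}\cup C)=U_\alpha(C)$, shows that a proper $B\in\DM(\mathbf P)$ with $B\subseteq P_\alpha$ is exactly the same object as a proper member of $\DM(\mathbf P_\alpha)$ — the excluded case $U_\alpha(C)=\{1\}$ cannot occur, since it would force $B=P$ on one side and $B=P_\alpha$ on the other. Thus $\Phi\colon\DM(\mathbf P)\to Q$ defined by $\{0\}\mapsto\hat 0$, $P\mapsto\hat 1$ and $B\mapsto B$ (a proper $B\subseteq P_\alpha$ read as an element of $\DM(\mathbf P_\alpha)$) is a bijection onto the horizontal sum $Q$ of the complete lattices $\BDM(\mathbf P_\alpha)$, with $\hat 0,\hat 1$ the identified bottom and top.

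Finally I would check that $\Phi$ is an order isomorphism: on proper elements it is the identity on underlying sets, so $\subseteq$ and its inverse are preserved there; it sends the bottom $\{0\}$ and top $P$ of $\BDM(\mathbf P)$ to the bottom and top of $Q$; and two proper closed sets $B_1\subseteq P_\alpha$, $B_2\subseteq P_\beta$ with $\alpha\neq\beta$ are $\subseteq$-incomparable in $\DM(\mathbf P)$ (their parts above $0$ sit in disjoint summands), matching the incomparability of their images in the horizontal sum. Since $\BDM(\mathbf P)$ is a complete lattice, so is $\mathbf Q$, and $\BDM(\mathbf P)\cong\mathbf Q$. I expect the single delicate point to be the closure-operator comparison above, precisely the bookkeeping around the top element $1$, where $L(1)=P$ while $L_\alpha(1)=P_\alpha$; the rest is routine. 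An essentially equivalent route is to check directly that the horizontal sum of the complete lattices $\BDM(\mathbf P_\alpha)$ is itself a complete lattice into which $\mathbf P$ embeds supremum-densely and infimum-densely, and then to invoke Schmidt's characterization \cite{Schmidt} recalled above; this replaces the closure computation by the verification that suprema and infima formed inside one summand of the horizontal sum coincide with those formed in that summand's completion.
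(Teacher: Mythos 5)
Your proof is correct, but it takes a genuinely different route from the paper. The paper's argument is three lines long: it observes that a horizontal sum of complete lattices is complete, that $\mathbf P$ embeds into $\mathbf Q$ join-densely and meet-densely, and then invokes Schmidt's characterization of the Dedekind--MacNeille completion as the (essentially unique) complete lattice admitting such a dense embedding --- exactly the alternative you sketch in your closing sentence. Your main argument instead computes $\DM(\mathbf P)$ explicitly: you show every proper closed set lives in a single summand, and you carry out the bookkeeping comparing $LU$ with $L_\alpha U_\alpha$ (the only delicate point being $L(1)=P$ versus $L_\alpha(1)=P_\alpha$, which you handle correctly, as well as ruling out $U_\alpha(C)=\{1\}$ for proper closed sets). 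What the paper's route buys is brevity and the avoidance of all closure computations; what your route buys is a concrete description of the elements of $\BDM(\mathbf P)$ as sets (each proper element is literally a proper element of some $\DM(\mathbf P_\alpha)$), which makes the isomorphism completely explicit rather than an abstract consequence of uniqueness. Both are valid; if you keep your version, it would be worth stating the converse inclusion (that every proper member of $\DM(\mathbf P_\alpha)$ is closed in $\mathbf P$) as an explicit step rather than leaving it implicit in the symmetry of the computation.
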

\begin{proof} \smartqed 
Clearly, a horizontal sum of complete lattices is a complete lattice. Moreover, 
$\mathbf P=(P,\leq,0,1)$ is both join-dense and meet-dense in $\mathbf Q$ and we have 
an order embedding from $\mathbf P$ into $\mathbf Q$. It follows that 
$\BDM({\mathbf P})$ is order-isomorphic to   $\mathbf Q$. 
\qed
\end{proof}

Using this, we can prove the following result. 

\begin{proposition}\label{xxdmhosum}Let $\mathbf P=(P,\leq,{}',0,1)$ be  a bounded poset such that 
$\mathbf P$ is a horizontal sum of pseudo-orthomodular posets $\mathbf P_{\alpha}=(P_{\alpha},\leq_{\alpha},{}^{{'}_{\alpha}},0,1)$, 
$\alpha\in\Lambda$. 
Then $\mathbf P$ is a pseudo-orthomodular poset. 
\end{proposition}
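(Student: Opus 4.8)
The plan is to reduce the pseudo-orthomodularity of the horizontal sum $\mathbf P$ to that of the summands $\mathbf P_\alpha$ by a careful bookkeeping of cones. First I would verify the structural facts needed: in a horizontal sum, the complementation $'$ is determined componentwise (on each $P_\alpha\setminus\{0,1\}$ it agrees with $'_\alpha$, and $0'=1$, $1'=0$), so $\mathbf P$ is indeed a poset with complementation by the remark preceding Proposition~\ref{dmhosum}. The crucial observation is that for $y\in P$ with $y\notin\{0,1\}$, every element comparable to $y$ or to $y'$ lies in the same summand $P_\alpha$ as $y$; and when computing a cone $U(a,b)$ or $L(a,b)$ of two elements lying in different summands, the result collapses to $\{1\}$ or $\{0\}$ respectively. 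This means that the nested $UL$-expression $L(U(L(x,y),y'),y)$ can be evaluated "locally'' in the summand containing $y$, with the only subtlety being the cases $y\in\{0,1\}$ and $x$ lying outside the summand of $y$.

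The main steps, for the identity $L(U(L(x,y),y'),y)\approx L(x,y)$, would be: (1) dispose of the degenerate cases $y=0$ (both sides equal $L(0)=\{0\}$) and $y=1$ (left side is $L(U(L(x),0'),1)=L(U(L(x),1))=L(LU(x))=L(x)=L(x,1)$, using $1'=0$ and $0\in$ every upper cone via $U(\dots,1)\ni\dots$; one checks $U(L(x),1)=LU(x)$ is false in general but $L(U(L(x),1))=L(x)$ follows since $U(L(x),1)$ has least element $1$ only... I would instead note $L(x,1)=L(x)$ and that $U(M,1)=U(M)\cap U(1)$, handled directly). (2) For $y\in P_\alpha\setminus\{0,1\}$: if $x\in P_\alpha$ as well, then $L(x,y)$, $y'=y'_\alpha$, and all intermediate cones coincide with the corresponding cones computed inside $\mathbf P_\alpha$ (possibly with $0$ or $1$ adjoined in a controlled way), so the identity for $\mathbf P$ reduces to the identity $L_\alpha(U_\alpha(L_\alpha(x,y),y'_\alpha),y)\approx L_\alpha(x,y)$, which holds since $\mathbf P_\alpha$ is pseudo-orthomodular. (3) If instead $x\notin P_\alpha$ (so $x$ is $0$, $1$, or in another summand), then $L(x,y)=L(0)=\{0\}$ unless $x=1$, in which case $L(x,y)=L(y)$; in the former subcase both sides are $\{0\}$, and the latter subcase is $L(U(L(y),y'),y)=L(U(\{0\}),y)=L(P,y)=L(y)=L(1,y)$. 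By the left–right symmetry of the two defining conditions (they are dual under $'$, via the De Morgan behaviour of the horizontal sum), the second identity $U(L(U(x,y),y'),y)\approx U(x,y)$ follows by the order-dual argument, or can be derived from the first using that $\mathbf P$ has an antitone involution.

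The step I expect to be the main obstacle is not any single case but assembling the "locality'' lemma cleanly: one must show precisely which intermediate cones in $L(U(L(x,y),y'),y)$ stay inside a single summand and in what sense they match the summand's own cones, since the ambient $U$ and $L$ of $\mathbf P$ always contain the extra global bounds. The safest route is to prove a small lemma: for $a,b$ in a common summand $P_\alpha$ with at least one of them in $P_\alpha\setminus\{0,1\}$, one has $L_{\mathbf P}(a,b)=L_{\mathbf P_\alpha}(a,b)$ and $U_{\mathbf P}(a,b)=U_{\mathbf P_\alpha}(a,b)\cup\{1\}$ — and then track how the trailing outer $L(\,\cdot\,,y)$ absorbs the adjoined $1$. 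With that lemma in hand the verification is routine bookkeeping. An alternative, and perhaps shorter, route would be to invoke Proposition~\ref{dmhosum}: $\BDM(\mathbf P)$ is the horizontal sum of the $\BDM(\mathbf P_\alpha)$; but since a $\BDM$-completion of a pseudo-orthomodular poset need not be orthomodular (as the text stresses), this does not immediately give what we want, so I would keep the direct cone computation as the primary argument.
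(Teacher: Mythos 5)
Your proposal follows essentially the same route as the paper's proof: dispose of the degenerate cases $x\in\{0,1\}$ or $y\in\{0,1\}$, observe that $L(x,y)=\{0\}$ forces both sides to collapse when $x$ and $y$ lie in different summands, and reduce the remaining case to pseudo-orthomodularity of the common summand $\mathbf P_\alpha$ via the locality of the cones $U$ and $L$ (your ``locality lemma'' is exactly the unstated justification behind the paper's identities $L(x,y)=L_{P_\alpha}(x,y)$, etc.). The only blemishes are two computational slips in the trivial cases --- for $y=1$ you substitute $0'$ where $y'=1'=0$ is needed, and for $x=1$ the step $L(U(L(y),y'),y)=L(P,y)=L(y)$ is wrong since $L(P,y)=\{0\}$; the correct chain is $U(L(y),y')=U(y,y')=\{1\}$, hence $L(\{1\},y)=L(y)$ --- but these cases are immediate and the paper itself dismisses them as clear.
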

\begin{proof} \smartqed If $x\in \{0,1\}$  or $y\in \{0,1\}$ then clearly 
$L(U(L(x,y),y'),y) \approx L(x,y)$. Assume that $x, y\in P\setminus\{0,1\}$.
Suppose first that $x\in P_{\alpha}\setminus\{0,1\}$ and $y\in P_{\beta}\setminus\{0,1\}$, 
$\alpha, \beta\in\Lambda$, $\alpha\not= \beta$. It follows that $ L(x,y)=\{0\}$. Hence 
$U(L(x,y),y')=U(y')$ and $L(U(y'),y)=\{0\}$, i.e., we have again $L(U(L(x,y),y'),y) \approx L(x,y)$.
To the end, assume that  $x, y\in P_{\alpha}\setminus\{0,1\}$. We have 
$L(x,y)=L_{P_{\alpha}}(x,y)$, $U(L(x,y),y')=U_{P_{\alpha}}(L_{P_{\alpha}}(x,y),y')$ 
and $L(U(L(x,y),y'),y)=L_{P_{\alpha}}(U_{P_{\alpha}}(L_{P_{\alpha}}(x,y),y'),y)$.    
This yields  $L(U(L(x,y),y'),y) \approx L(x,y)$ since  $\mathbf P_{\alpha}$ 
is a pseudo-orthomodular poset. 
\qed
\end{proof}

\begin{theorem}\label{ppdmhosum}Let $\mathbf P=(P,\leq,{}',0,1)$ be  a bounded poset such that 
$\mathbf P$ is a horizontal sum of pseudo-orthomodular posets $\mathbf P_{\alpha}=(P_{\alpha},\leq_{\alpha},{}^{{'}_{\alpha}},0,1)$, 
$\alpha\in\Lambda$, and any $\BDM({\mathbf P}_{\alpha})$ is a complete orthomodular lattice. 
Then $\BDM({\mathbf P})$ is a complete orthomodular lattice. 
\end{theorem}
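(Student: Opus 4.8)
The plan is to reduce the statement to the blockwise structure of the horizontal sum and then exploit the orthomodularity of the individual blocks. First I would apply Proposition~\ref{dmhosum} to identify $\BDM(\mathbf P)$, up to order isomorphism, with the horizontal sum $\mathbf Q$ of the complete lattices $\BDM(\mathbf P_\alpha)$, $\alpha\in\Lambda$. Since a horizontal sum of complete lattices is again complete (as already observed in the proof of Proposition~\ref{dmhosum}), $\mathbf Q$ is a complete lattice, so the only things left to do are to equip $\mathbf Q$ with the correct orthocomplementation and to verify the orthomodular law.

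For the orthocomplementation I would recall that the antitone involution $'$ of $\mathbf P$ extends canonically to an antitone involution of $\BDM(\mathbf P)$, given on $X\in\DM(\mathbf P)$ by $X':=L(\{x'\mid x\in X\})$; this map is antitone, it is an involution precisely because $X\in\DM(\mathbf P)$, and it restricts on $P$ to $L(a)\mapsto L(a')$. The point to check is that this extension is compatible with the block decomposition: by (the proof of) Proposition~\ref{dmhosum}, a closed set $X\in\DM(\mathbf P)$ is either $L(0)$, or $P$, or contained in a unique $P_\alpha$; in the last case $X'$ again lies in $\DM(\mathbf P_\alpha)$ and coincides with the complement computed inside $\BDM(\mathbf P_\alpha)$, while $L(0)$ and $P$ are interchanged. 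Hence, under the isomorphism of Proposition~\ref{dmhosum}, $'$ corresponds to the map on $\mathbf Q$ obtained by gluing the orthocomplementations of the blocks $\BDM(\mathbf P_\alpha)$ (these exist because each $\BDM(\mathbf P_\alpha)$ is orthomodular). That the glued map is indeed a complementation on $\mathbf Q$, i.e. also satisfies $L(x,x')\approx\{0\}$ and $U(x,x')\approx\{1\}$, follows from the remark preceding Proposition~\ref{dmhosum} that a horizontal sum of posets with complementation is a poset with complementation.

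It then remains to verify the orthomodular law for $\mathbf Q$, and here I would use the characterization recalled in the text: a lattice with complementation is orthomodular if and only if $x\le y$ and $x'\wedge y=0$ imply $x=y$. So suppose $x\le y$ in $\mathbf Q$ and $x'\wedge y=0$. If $x=0$ then $x'\wedge y=1\wedge y=y$, so $y=0=x$; if $x=1$ then $y=1=x$. Otherwise $x\notin\{0,1\}$ lies in a unique block $\BDM(\mathbf P_\alpha)$, and since $0<x\le y$ the element $y$ lies in that same block; as $x'$ does too, the meet $x'\wedge y$ is computed entirely inside $\BDM(\mathbf P_\alpha)$ and equals $0$, whence $x=y$ by orthomodularity of $\BDM(\mathbf P_\alpha)$. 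Thus $\mathbf Q$, and therefore $\BDM(\mathbf P)$, is a complete orthomodular lattice. One may also note, via Proposition~\ref{xxdmhosum}, that $\mathbf P$ is itself a pseudo-orthomodular poset, so the hypotheses are internally consistent.

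The step I expect to be the main obstacle is the middle one: making precise that the Dedekind-MacNeille completion is compatible with horizontal sums not merely as an order construction (Proposition~\ref{dmhosum}) but together with the unary operation, i.e. that the canonical extension of $'$ to $\BDM(\mathbf P)$ respects the blocks and restricts blockwise to the canonical extensions on the $\BDM(\mathbf P_\alpha)$. Once this bookkeeping is in place, both completeness and the orthomodular law fall out immediately from the blockwise argument.
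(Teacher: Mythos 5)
Your proof is correct and follows essentially the same route as the paper: reduce via Proposition~\ref{dmhosum} to the horizontal sum of the complete lattices $\BDM(\mathbf P_\alpha)$, observe that the orthocomplementation is carried along blockwise, and conclude orthomodularity from that of the blocks. The paper dismisses the middle step (compatibility of the involution with the block decomposition) and the final blockwise verification of the orthomodular law as ``evident''; you have simply written out the bookkeeping it omits.
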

\begin{proof} \smartqed From Proposition \ref{dmhosum} we know that  $\BDM({\mathbf P})$  is order-isomorphic to 
  a horizontal sum $\mathbf Q$ of complete lattices 
$\BDM({\mathbf P_{\alpha}})$. It is evident that the isomorphism preserves 
the antitone involution  as well. Since 
any $\BDM({\mathbf P}_{\alpha})$ is a complete orthomodular lattice we have 
%%%from Proposition \ref{xxdmhosum} 
that $\BDM({\mathbf P})$ is orthomodular. \qed
\end{proof}

We obtain the following corollary of Theorem \ref{ppdmhosum} and Proposition \ref{niederle}. 

\begin{corollary}\label{cccdmhosum}Let $\mathbf P=(P,\leq,{}',0,1)$ be  a bounded poset such that 
$\mathbf P$ is a horizontal sum of Boolean posets 
$\mathbf P_{\alpha}=(P_{\alpha},\leq_{\alpha},{}^{{'}_{\alpha}},0,1)$,  $\alpha\in\Lambda$,  
and $M(x,y) = L(U(x,y'),y)$  and $R(x,y) = LU(L(x,y),x')$. 
Then $\BDM({\mathbf P})$ is a complete orthomodular lattice.  Moreover, 
 $\BDM(\mathbf P)$ is  a left residuated lattice with respect to $\odot$ and $\to$ reached by the 
 $\BDM$-transformation from $M$ and $R$, respectively.  
\end{corollary}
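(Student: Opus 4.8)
The plan is to assemble the statement from the three ingredients that are already available. First, recall that every Boolean poset is in particular a pseudo-orthomodular poset (noted just before Proposition~\ref{dmhosum}), so the hypothesis that $\mathbf P$ is a horizontal sum of Boolean posets $\mathbf P_\alpha$ puts us exactly in the situation of Theorem~\ref{ppdmhosum}, provided we know that each $\BDM(\mathbf P_\alpha)$ is a complete orthomodular lattice. But that is precisely Proposition~\ref{niederle}: the Dedekind--MacNeille completion of a Boolean poset is a complete Boolean algebra, hence \emph{a fortiori} a complete orthomodular lattice. Applying Theorem~\ref{ppdmhosum} then yields that $\BDM(\mathbf P)$ is a complete orthomodular lattice, which is the first assertion.

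For the ``moreover'' part I would invoke the result of \cite{CLRepo} recalled at the beginning of Section~3: with $M(x,y)=L(U(x,y'),y)$ and $R(x,y)=LU(L(x,y),x')$, any pseudo-orthomodular poset $\mathbf P$ is an operator left residuated poset, and by Proposition~\ref{xxdmhosum} our horizontal sum $\mathbf P$ is pseudo-orthomodular, so these $M$ and $R$ do give an operator left residuation on $\mathbf P$. It then remains to check that the $\BDM$-transformation of these operators, namely $x\odot y=(x\vee y')\wedge y$ and $x\to y=(x\wedge y)\vee x'$, satisfies (\ref{requ1}) and (\ref{requ2}) in $\BDM(\mathbf P)$. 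Since $\BDM(\mathbf P)$ is already known (first part) to be a complete orthomodular lattice, and since every orthomodular lattice is left residuated with respect to exactly these two term operations (stated in Section~3), the pair $(\odot,\to)$ automatically satisfies (\ref{requ1}) and (\ref{requ2}). The only subtlety is to make sure the $\BDM$-transformed expressions genuinely coincide with $(x\vee y')\wedge y$ and $(x\wedge y)\vee x'$; this is immediate from the definition of $\BDM$-transformation, replacing each $U(\cdot,\cdot)$ or $LU(\cdot,\cdot)$ by $\vee$ and each $L(\cdot,\cdot)$ by $\wedge$, noting that $x'$ is interpreted via the antitone involution on $\BDM(\mathbf P)$ (which exists because a horizontal sum of complements is a complement, and the order isomorphism of Proposition~\ref{dmhosum} preserves it, as observed in the proof of Theorem~\ref{ppdmhosum}).

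In short, the proof is a three-line citation argument: Proposition~\ref{niederle} feeds Theorem~\ref{ppdmhosum} to get orthomodularity of $\BDM(\mathbf P)$; Proposition~\ref{xxdmhosum} together with the cited operator left residuation of \cite{CLRepo} handles the operator side; and the known left residuation of orthomodular lattices by $\odot,\to$ closes the ``moreover''. I do not anticipate a genuine obstacle here, since all the hard work is in the earlier results; the one place demanding a little care is bookkeeping for the unary operation: one should be explicit that in $\BDM(\mathbf P)$ the element $x'$ really is the orthocomplement transported through the horizontal-sum decomposition, so that the identity $x\vee y\approx((x\vee y)\wedge y')\vee y$ characterising orthomodular lattices applies to the $\BDM$-transformed terms and delivers (\ref{requ2}).
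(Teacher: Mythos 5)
Your argument is correct and is essentially the paper's own proof: Proposition~\ref{niederle} supplies the hypothesis of Theorem~\ref{ppdmhosum} to get that $\BDM(\mathbf P)$ is a complete orthomodular lattice, and the ``moreover'' part follows from the fact (cited from \cite{CLRela}) that every orthomodular lattice is left residuated with respect to $x\odot y=(x\vee y')\wedge y$ and $x\to y=(x\wedge y)\vee x'$. The extra detour through Proposition~\ref{xxdmhosum} and the operator left residuation of $\mathbf P$ is harmless but not needed for the stated conclusion.
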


The proof of the last assertion in Corollary \ref{cccdmhosum} follows 
from the fact that every orthomodular lattice is  a left residuated lattice with respect to 
$x\odot y=(x\vee y')\wedge y$ and 
$x\to y=(x\wedge y)\vee x'$, see \cite{CLRela}  for details.

Hence, horizontal sums of non-trivial Boolean posets  form a class of 
pseudo-orthomodular posets which can be extended to an orthomodular lattice 
and the residuation of the latter can be reached by the 
$\BDM$-transformation. 

\begin{example}\rm 
Consider the horizontal sum $\mathbf P$ of the Boolean poset 
 $\mathbf P_1$ where $P_1=\{0, a, b, c, d, e, e', d', c', b', a', 1\}$ 
  and an four-element Boolean algebra  $\mathbf P_2$ 
  where $P_2=\{0,f,$ $f',1\}$  and whose Hasse diagram is depicted in {\rm Fig.\ 2}:

\vspace*{3mm}

\[
\setlength{\unitlength}{6.6mm}
\begin{picture}(18,8)
\put(9,0){\circle*{.3}}
\put(6,2){\circle*{.3}}
\put(8,2){\circle*{.3}}
\put(10,2){\circle*{.3}}
\put(12,2){\circle*{.3}}
\put(6,4){\circle*{.3}}
\put(12,4){\circle*{.3}}
\put(6,6){\circle*{.3}}
\put(8,6){\circle*{.3}}
\put(10,6){\circle*{.3}}
\put(12,6){\circle*{.3}}
\put(9,8){\circle*{.3}}
\put(1,4){\circle*{.3}}
\put(17,4){\circle*{.3}}
\put(9,0){\line(-3,2)3}
\put(9,0){\line(-1,2)1}
\put(9,0){\line(1,2)1}
\put(9,0){\line(3,2)3}
\put(9,8){\line(-3,-2)3}
\put(9,8){\line(-1,-2)1}
\put(9,8){\line(1,-2)1}
\put(9,8){\line(3,-2)3}
\put(6,2){\line(0,1)4}
\put(12,2){\line(0,1)4}
\put(6,4){\line(1,1)2}
\put(6,2){\line(1,1)4}
\put(8,2){\line(1,1)4}
\put(10,2){\line(1,1)2}
\put(8,2){\line(-1,1)2}
\put(10,2){\line(-1,1)4}
\put(12,2){\line(-1,1)4}
\put(12,4){\line(-1,1)2}
\put(1,4){\line(2,-1)8}
\put(1,4){\line(2,1)8}
\put(17,4){\line(-2,-1)8}
\put(17,4){\line(-2,1)8}
\put(8.85,-.75){$0$}
\put(5.4,1.9){$a$}
\put(7.2,1.9){$b$}
\put(10.45,1.9){$c$}
\put(12.4,1.9){$d$}
\put(5.4,3.9){$e$}
\put(.4,3.9){$f$}
\put(12.4,3.9){$e'$}
\put(17.4,3.9){$f'$}
\put(5.3,5.9){$d'$}
\put(7.2,5.9){$c'$}
\put(10.45,5.9){$b'$}
\put(12.4,5.9){$a'$}
\put(8.85,8.4){$1$}
\put(8.2,-1.5){{\rm Fig.\ 2}}
\end{picture}
\]

\vspace*{8mm}

According to Proposition~\ref{xxdmhosum} and 
Corollary \ref{cccdmhosum}, $\mathbf P$  
 is a pseudo-orthomodular poset 
and $\BDM({\mathbf P})$  is a nonmodular orthomodular lattice.
\end{example}

We can solve our problem also from the opposite direction. Namely, we can assume that 
$\BDM({\mathbf P})$ is really an orthomodular lattice and ask what is  $\mathbf P$. 
The answer is as follows.

\begin{theorem}\label{xTheorem4.5}  Let $\mathbf P=(P,\leq,{}',0,1)$ 
be a complemented poset such that $\BDM(\mathbf P)$ is an orthomodular 
lattice. Then $\mathbf P$ is pseudo-orthomodular.
\end{theorem}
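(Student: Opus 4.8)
The plan is to verify the first pseudo-orthomodular identity, namely
\[
L(U(L(x,y),y'),y)\approx L(x,y)
\qquad\text{for all }x,y\in P,
\]
by transporting the question into the complete lattice $\BDM(\mathbf P)$, where the orthomodular law is available, and then pulling the conclusion back to $\mathbf P$ via the embedding $a\mapsto L(a)$. Throughout I identify $P$ with $\{L(a)\mid a\in P\}\subseteq\DM(\mathbf P)$ and write $\sqcup,\sqcap,{}^\perp$ for join, meet and the orthocomplementation of $\BDM(\mathbf P)$ (the latter extends $'$ since, as remarked in the excerpt, a horizontal-sum-type argument or direct check shows the DM-completion of a complemented poset is an ortholattice with ${}'$ preserved; in fact for a complemented poset one has $L(a)^\perp=L(a')$). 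The inclusion $L(x,y)\subseteq L(U(L(x,y),y'),y)$ is automatic from the general cone inequalities $A\subseteq LU(A)$ and monotonicity, so the whole content is the reverse inclusion.

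For the reverse inclusion, I would first record the translation of the two sides into $\BDM(\mathbf P)$. Since $a\mapsto L(a)$ preserves all existing meets, $L(x,y)=L(x)\sqcap L(y)$. For the upper cone appearing inside, note that for any subset $B\subseteq P$ one has $LU(B)=\bigsqcup_{\BDM(\mathbf P)}\{L(b)\mid b\in B\}$; applying this with $B=L(x,y)\cup\{y'\}$ and using that $L(x,y)$ is a down-set whose join in $\BDM(\mathbf P)$ is $L(x)\sqcap L(y)$, I get
\[
LU(L(x,y),y') \;=\; \big(L(x)\sqcap L(y)\big)\sqcup L(y'),
\]
and hence, intersecting with $L(y)$ and using $LU(\,\cdot\,)$ on the whole expression once more,
\[
L\big(U(L(x,y),y'),y\big)
\;=\;\Big(\big(L(x)\sqcap L(y)\big)\sqcup L(y)^\perp\Big)\sqcap L(y).
\]
Now apply the orthomodular law in $\BDM(\mathbf P)$ in the form $\big((u\sqcap v)\sqcup v^\perp\big)\sqcap v = u\sqcap v$, which is exactly the first identity displayed for orthomodular lattices in the excerpt, with $u=L(x)$ and $v=L(y)$. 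This gives $L(U(L(x,y),y'),y)=L(x)\sqcap L(y)=L(x,y)$, as desired. Finally, since $x\mapsto L(x)$ is an order isomorphism onto its image and this image is exactly $\{L(a)\mid a\in P\}$, the set-equality $L(U(L(x,y),y'),y)=L(x,y)$ computed inside $\DM(\mathbf P)$ is literally the required identity in $\mathbf P$.

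The step I expect to be the genuine obstacle — everything else is bookkeeping with cones — is the clean identification
\[
LU(L(x,y),y') = \big(L(x)\sqcap L(y)\big)\sqcup L(y^{\prime})
\]
inside $\BDM(\mathbf P)$, i.e. showing that the DM-join of the down-set $L(x,y)$ together with the principal ideal $L(y')$ is computed "as expected." This needs two ingredients: that the DM-completion join of a family $\{L(b_i)\}$ is $LU\big(\bigcup L(b_i)\big)$ (which is Schmidt's supremum-density, already quoted), and that $\bigsqcup\{L(b)\mid b\in L(x,y)\} = L(x)\sqcap L(y)$, which holds because $L(x,y)$ is the down-set of $L(x)\sqcap L(y)$ in $\BDM(\mathbf P)$ and joins of down-sets are the obvious thing in any complete lattice. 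One must also be slightly careful that $L(y)^\perp=L(y')$; this is where "complemented" (not merely "with antitone involution") is used, via $L(y,y')=\{0\}$ and $U(y,y')=\{1\}$, so that $L(y')$ is indeed the orthocomplement of $L(y)$ in $\BDM(\mathbf P)$. Once these identifications are in place, the orthomodular law does all the work. (Symmetrically, the dual identity $U(L(U(x,y),y'),y)\approx U(x,y)$ follows the same way from the second orthomodular identity, or simply by the equivalence of the two pseudo-orthomodular conditions, so it need not be treated separately.)
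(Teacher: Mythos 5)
Your proposal is correct and takes essentially the same route as the paper: translate the cone expressions into lattice terms of $\BDM(\mathbf P)$, apply the orthomodular law there, and read the resulting set equality back in $\mathbf P$. The only (immaterial) difference is that the paper verifies the dual condition $U(L(U(x,y),y'),y)=U(x,y)$ by computing $L(U(x,y))=x\vee_{\BDM(\mathbf P)}y=((x\vee_{\BDM(\mathbf P)}y)\wedge_{\BDM(\mathbf P)}y')\vee_{\BDM(\mathbf P)}y=LU(L(U(x,y),y'),y)$, i.e.\ it uses the join form of the orthomodular law, whereas you verify the first identity $L(U(L(x,y),y'),y)=L(x,y)$ with the meet form; the two are equivalent via De Morgan, so nothing substantive changes.
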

\begin{proof}\smartqed Let $\BDM(\mathbf P)$ be an orthomodular 
lattice and let $x, y\in P$. We compute:
$$
L(U(x,y))=x\vee_{\BDM(\mathbf P)}y=%
((x\vee_{\BDM(\mathbf P)} y)\wedge_{\BDM(\mathbf P)} y')%
\vee_{\BDM(\mathbf P)} y%
=LU(L(U(x,y),y'),y).
$$
It follows that $U(L(U(x,y),y'),y) = U(x,y)$, i.e., $\mathbf P$ is 
 pseudo-orthomodular.\qed
\end{proof}

Let us note that the result of Theorem \ref{xTheorem4.5} justifies the concept 
of a pseudo-orthomodular poset. With respect to the completion into 
an orthomodular lattice it is more appropriate than the concept 
of  an orthomodular poset. It will be emphasized also by Corollary \ref{dusl2} 
and Theorem \ref{apfinch} below.

In what follows we will show that  for finite orthomodular posets 
 $\mathbf P$ such that  $\mathbf P$   is not a lattice
  their Dedekind-MacNeille completions $\BDM(\mathbf P)$   are not orthomodular. 
 
 We will need the following definitions and theorem  from Kalmbach
\cite{kalmb83} reformulated as in  Svozil and Tkadlec \cite{svozil-tkadlec}.

\begin{definition}\label{D:diagram}{\rm 
A\/ {\em diagram} is a pair $(V,E)$, where $V\ne\emptyset$ is a set of\/
{\em atoms} (drawn as points) and
$E\subseteq {\rm exp}\,V\>\backslash\,\{\emptyset\}$ is a set of\/
{\em blocks} (drawn as line segments connecting corresponding points).
A\/ {\em loop} of order $n\ge 2$ ($n$ being a natural number) in a
diagram $(V,E)$ is a sequence $(e_1,\dots ,e_n)\in E^n$ of mutually
different blocks such that there are mutually distinct
atoms $\nu_1,\dots,\nu_n$ with $\nu_i\in e_i\cap e_{i+1}\
(i=1,\dots,n,\ e_{n+1}=e_1)$.}
\end{definition}

In particular, we precise it as follows (see e.g. \cite{kalmb83}).

\begin{definition}\label{D:greechie-diagram}{\rm 
A\/ {\em Greechie diagram} is a diagram satisfying the following conditions:
\begin{enumerate}
\item[(1)] Every atom belongs to at least one block.
\item[(2)] If there are at least two atoms then every block is at least
2-element.
\item[(3)] Every block which intersects with another block is at least
3-element.
\item[(4)] Every pair of different blocks intersects in at most one atom.
\item[(5)] There is no loop of order 3.
\end{enumerate}}
\end{definition}

Recall that a {\em block} in an orthomodular poset  is a maximal Boolean 
subalgebra of it. An element $a$ of a poset $\mathbf P$ with least element 
$0$ is an {\em atom} if $0 < a$ and there is no $x\in P$ such that 
$0 < x < a$. A poset $\mathbf P$ with a least element $0$ is 

\begin{enumerate}[(i)]
\item {\em atomic} if every element $b > 0$ has an atom $a$ below it, 
\item {\em atomistic}  if every element is a join a set of atoms of $\mathbf P$. 
\end{enumerate}

\begin{theorem}\label{th:loop-lemma}{\rm \cite[Loop Lemma]{kalmb83}}
For every Greechie diagram with only finite blocks there is exactly
one (up to an isomorphism) orthomodular poset such that there are
one-to-one correspondences between atoms and atoms and between
blocks and blocks which preserve incidence relations.
The poset is a lattice if and only if the Greechie diagram has
no loops of order~4.
\end{theorem}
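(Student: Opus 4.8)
The plan is to establish the three assertions of the Loop Lemma in turn: existence of the orthomodular poset attached to a Greechie diagram, its uniqueness, and the characterization of the lattice case.

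\emph{Construction and existence.} Given a Greechie diagram $(V,E)$ with all blocks finite, I would realize the poset $P$ as the \emph{pasting} of the finite Boolean algebras $B_e:=({\rm exp}\,e,\subseteq)$, $e\in E$. Concretely, take the disjoint union of the $B_e$ and identify: all bottom elements to a single $0$; all top elements to a single $1$; and, for each atom $\nu\in V$ lying in two blocks $e,f$, the four-element subalgebras $\{\emptyset,\{\nu\},e\setminus\{\nu\},e\}\subseteq B_e$ and $\{\emptyset,\{\nu\},f\setminus\{\nu\},f\}\subseteq B_f$ elementwise (so $\nu$ and its block-complement $\nu'$ become shared), closing up transitively. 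Order $P$ by $x\le y$ iff $x=0$, or $y=1$, or $x,y$ lie in a common $B_e$ with $x\subseteq y$ there, and let $x'$ be the complement computed in any block containing $x$. The first block of work is to check this is well defined and is an orthomodular poset. Antisymmetry and transitivity reduce, via condition (4) ($|e\cap f|\le 1$), to the fact that any non-trivial element shared by two distinct blocks is a shared atom or the complement of one, so a chain leaving a block must pass through such an element; condition (5) (no loop of order $3$) rules out the remaining incoherence in gluing three blocks around a triangle; condition (3) ($\ge 3$ atoms in a block meeting another) is what makes the orthocomplement single-valued, since otherwise a $2$-element block sharing an atom $\nu$ would force $\nu'$ to be at once an atom of one block and a join in another. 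One then verifies that each $B_e$ is a maximal Boolean subalgebra of $P$, that the atom- and block-incidences of $(V,E)$ are reproduced, and that $P$ is orthomodular: if $x\le y$, then $x,x',y$ all lie in one block (finiteness of blocks is used to see that joins and meets existing inside a block are already the joins and meets of $P$), and orthomodularity holds there because the block is Boolean.

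\emph{Uniqueness.} Let $Q$ be any orthomodular poset equipped with incidence-preserving bijections between its atoms and $V$ and between its blocks and $E$. Every element of an orthomodular poset lies in a maximal Boolean subalgebra, i.e.\ in a block; since the matching block of the diagram is finite, each $q\in Q$ is the finite join of the atoms of $Q$ below it inside that block, and those atoms correspond to elements of $V$. Sending $q$ to the join in $P$ of the images of those atoms --- which exists because it is formed inside the corresponding block --- defines a map $Q\to P$; using the incidence correspondence one checks it is well defined, bijective, an order isomorphism, and compatible with $'$. Hence $Q\cong P$.

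\emph{Lattice iff no loop of order $4$.} For the forward direction, suppose $(V,E)$ has a loop $(e_1,e_2,e_3,e_4)$ with distinct atoms $\nu_i\in e_i\cap e_{i+1}$ (indices mod $4$). Put $a:=\nu_1'$ and $b:=\nu_3'$; these are co-atoms of their blocks, so $a,b\notin\{0,1\}$, and since the $e_i$ are distinct $a$ and $b$ lie in no common block. In $B_{e_2}$ the distinct atoms $\nu_1,\nu_2$ satisfy $\nu_2\le\nu_1'=a$, and likewise $\nu_2\le\nu_3'=b$ in $B_{e_3}$; symmetrically $\nu_4\le a$ in $B_{e_1}$ and $\nu_4\le b$ in $B_{e_4}$. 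Thus $\nu_2,\nu_4$ are distinct incomparable common lower bounds of $\{a,b\}$, and a short computation --- using that no loop of order $3$ exists, so no block contains both $\nu_2$ and $\nu_4$ --- shows that no element lying below both $a$ and $b$ can lie above both $\nu_2$ and $\nu_4$; hence $a\wedge b$ does not exist and $P$ is not a lattice. For the converse, assume $(V,E)$ has no loop of order $3$ (automatic) or $4$, take $x,y\in P$, and reduce to $x,y\notin\{0,1\}$, $x\ne y$, and $x,y$ in no common block. If $z\ne 0$ is a common lower bound of $x$ and $y$, then $z$ sits in a block through $x$ and in a block through $y$; these are distinct, so by condition (4) their intersection is $\{0,\nu,\nu',1\}$ for a single atom $\nu$, forcing $z\in\{\nu,\nu'\}$, and $z=\nu'$ is excluded since a co-atom below $x$ would give $x\in\{\nu',1\}$ and then $x=y=\nu'$. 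So every nonzero common lower bound of $x,y$ is an atom, and if there were two distinct such atoms, chasing the (at most four) blocks witnessing the relations ``$\le x$'' and ``$\le y$'' for each of them produces a loop of order $3$ or $4$ in $(V,E)$, contrary to hypothesis. Hence the common lower bounds of $\{x,y\}$ form $\{0\}$ or $\{0,\nu\}$, so $x\wedge y$ exists; by the De Morgan laws all joins exist too, and $P$ is a lattice.

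\emph{Main obstacle.} The delicate parts are (a) showing that the pasting really is an orthomodular poset --- i.e.\ that conditions (1)--(5) are exactly what is needed for the gluing to be coherent (well-defined order, single-valued orthocomplement, blocks remaining maximal, orthomodular law inherited from the Boolean blocks) --- and (b) the combinatorial core of the lattice characterization, namely converting ``two incomparable maximal common lower bounds'' into an explicit loop of order $3$ or $4$ and, conversely, converting a loop of order $4$ into a meet-less pair. Everything else is bookkeeping inside finite Boolean algebras.
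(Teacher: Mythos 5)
The paper does not prove this statement at all: it is quoted verbatim from Kalmbach's book as the ``Loop Lemma'' and used as a black box (to build the Greechie logic of Fig.~3), so there is no in-paper argument to compare yours against. Judged on its own, your sketch follows the standard route --- paste the finite Boolean algebras $2^{e}$, $e\in E$, along $0$, $1$ and the four-element subalgebras $\{0,\nu,\nu',1\}$ of shared atoms; check the pasting is an orthomodular poset; prove uniqueness by reconstructing every element as a block-internal join of atoms; and run the loop-chasing argument for the lattice criterion --- and the architecture is right, including the observation that a $4$-loop $(e_1,\dots,e_4)$ makes $\nu_1'\wedge\nu_3'$ fail to exist because $\nu_2$ and $\nu_4$ are incomparable common lower bounds not dominated by any common lower bound.

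Two places deserve a warning. First, the pivotal lemma that joins and meets formed inside a block are already joins and meets of the whole pasting is what everything else (orthomodularity of $P$, maximality of the blocks, the lattice criterion) leans on, and you attribute it to finiteness of the blocks; finiteness is not the operative hypothesis --- the lemma is exactly where conditions (4) and (5) are consumed (an upper bound $z$ of $x,y\in B_e$ reached through two other blocks $f_1\neq f_2$ forces $z\in\{\mu,\mu'\}$ for the atom $\mu\in f_1\cap f_2$, and the subcase $z=\mu'$ is killed only by the absence of $3$-loops). The paper itself silently relies on this lemma when it asserts that ``suprema of elements from a block of the Greechie logic coincide with their suprema in the whole Greechie logic'', so it should be stated and proved explicitly. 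Second, in the uniqueness step you need that every block of the abstract poset $Q$ is a \emph{finite, atomic} Boolean algebra whose atoms are exactly the atoms of $Q$ incident to it; this is part of what ``incidence-preserving correspondence'' must be taken to mean, and without making it explicit the map $Q\to P$ is not yet well defined. With those two points repaired, the sketch becomes a correct proof of the cited theorem.
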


We use the notion {\em Greechie logic} for an orthomodular
poset that can be represented by a Greechie diagram with only finite edges. 
Recall that  every  element 
of a Greechie logic  is a supremum of a finite orthogonal set of atoms and 
suprema (infima) of elements from a block of the Greechie logic coincide with their 
suprema (infima) in the whole Greechie logic, respectively.

Using this, we can construct the promised example. 

\begin{example}\rm Let  $\mathbf P=(P,\leq,{}',0,1)$ be 
the finite Greechie logic given by the 
Greechie diagram in Fig.~3 (see also \cite[Exercise 3, page 259]{kalmb83}). 

The Greechie logic $\mathbf P$ has 
4 blocks $\mathbf B_0$, $\mathbf B_1$, $\mathbf B_2$ and $\mathbf B_3$. 
The maximal respective orthogonal sets of atoms of $\mathbf P$ are 
$\{x, y,z\}$, $\{z, t, s\}$, $\{s, u, v\}$ and $\{v, w, x\}$. 
Denote the set of all atoms of $\mathbf P$ by $A$.

\begin{figure}[htbp]\centering
  % Set unitlength to default in case it's not
  \setlength{\unitlength}{2pt}
  \begin{picture}(330,100)(0,0)
    \put(55,30){
% 123,345.
      \begin{picture}(50,80)(0,0)
      \put(30,60){\line(1,-1){30}}
        \put(0,30){\line(1,-1){30}}
        \put(30,0){\line(1,1){30}}
        \put(0,30){\line(1,1){30}}
         \put(30,60){\circle{10}}
        \put(15,45){\circle{10}}
        \put(45,45){\circle{10}}
        \put(0,30){\circle{10}}
        \put(15,15){\circle{10}}
        \put(30,0){\circle{10}}
        \put(45,15){\circle{10}}
        \put(60,30){\circle{10}}
         \put(20,60){\makebox(0,0)[r]{$s$}}
         \put(5,45){\makebox(0,0)[r]{$u$}}
        \put(-10,30){\makebox(0,0)[r]{$v$}}
        \put(5,15){\makebox(0,0)[r]{$w$}}
        \put(30,-10){\makebox(0,0)[t]{$x$}}
        \put(55,15){\makebox(0,0)[l]{$y$}}
        \put(70,30){\makebox(0,0)[l]{$z$}}
         \put(55,45){\makebox(0,0)[l]{$t$}}
         \put(-29,-25){{\rm Fig.\ 3} Greechie diagram of a finite orthomodular 
         poset $\mathbf P$  such that} 
         \put(-29,-31){\phantom{\rm Fig.\ 3} {\rm its Dedekind-MacNeille completion 
         $\BDM(\mathbf P)$   is not orthomodular. }}
      \end{picture}
    } % end of subpicture

  \end{picture}
  %\caption{Kalmbach's example
%\label{fig:greechie-2}}
\end{figure}

We have that $y\not\in L(s', x')\cap A=\{v,z\}$. It follows that 
$U(L(s', x'),x)=U(L(s', x')\cap A,x)=\{1\}$. Hence 
$L(U(L(s', x'),x), x')=L(\{1\}, x')=L(x')$ and $y\in L(x')$. 
We conclude that $\mathbf P$   is not pseudo-orthomodular, i.e., by 
Theorem \ref{xTheorem4.5}  $\BDM(\mathbf P)$   is not orthomodular. 
\end{example}

Motivated by the above example we will prove the following.

\begin{theorem}\label{orthocomplete} Let 
$\mathbf P=(P,\leq,{}',0,1)$ be an orthocomplete atomic orthomodular poset. 
The following conditions are equivalent:
\begin{enumerate}[{\rm(i)}]
\item $\mathbf P$ is pseudo-orthomodular.
\item $\mathbf P$ is a complete orthomodular lattice.
\item $\BDM(\mathbf P)$ is orthomodular. 
\end{enumerate}
\end{theorem}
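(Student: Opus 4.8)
The plan is to establish the cycle $(ii)\Rightarrow(iii)\Rightarrow(i)\Rightarrow(ii)$. The first two implications are short. For $(ii)\Rightarrow(iii)$: a complete lattice is, up to isomorphism, its own Dedekind--MacNeille completion, since the identity map embeds it both supremum-densely and infimum-densely into itself ($x=\bigvee\{y\mid y\le x\}=\bigwedge\{y\mid y\ge x\}$), so Schmidt's characterisation \cite{Schmidt} gives $\BDM(\mathbf P)\cong\mathbf P$, which is then orthomodular. And $(iii)\Rightarrow(i)$ is exactly Theorem~\ref{xTheorem4.5}, which needs only that $\mathbf P$ is a complemented poset. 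So the substance is $(i)\Rightarrow(ii)$: assuming $\mathbf P$ orthocomplete, atomic, orthomodular \emph{and} pseudo-orthomodular, I want to show $\mathbf P$ is a complete orthomodular lattice, and I would do this in three steps.

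First I would show $\mathbf P$ is atomistic; this uses only orthocompleteness, atomicity and orthomodularity, not pseudo-orthomodularity. Given $x>0$, pick (Zorn) a maximal orthogonal set $S$ of atoms with $S\le x$ and put $q:=\bigvee S$, which exists by orthocompleteness and satisfies $q\le x$. If $q<x$, the orthomodular law gives $x=q\vee(q'\wedge x)$ with $q'\wedge x>0$; by atomicity there is an atom $p\le q'\wedge x$, and since $p\le q'=\bigwedge\{s'\mid s\in S\}$ the set $S\cup\{p\}$ is again orthogonal, consists of atoms below $x$, and properly contains $S$ (as $p\le q'$ forces $p\notin S$), contradicting maximality. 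Hence $q=x$.

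Next, the heart of the matter: $\mathbf P$ is a lattice. It is enough to construct $a\wedge b$ for all $a,b$ (then $a\vee b=(a'\wedge b')'$). Choose a maximal orthogonal set $\{p_i\}$ of atoms lying below both $a$ and $b$, and set $q:=\bigvee p_i$, so $q\le a$ and $q\le b$. I claim $q=a\wedge b$; by the previous step it suffices to show every atom $r$ with $r\le a$, $r\le b$ satisfies $r\le q$. Suppose not. Then $r$ is not orthogonal to all the $p_i$ (else $\{p_i\}\cup\{r\}$ would contradict maximality), so $r\not\le p_j'$ for some $j$ and, in particular, $r\not\le q'$. Writing $a_1:=q'\wedge a$, $b_1:=q'\wedge b$ (which exist by the orthomodular law, with $a=q\vee a_1$, $b=q\vee b_1$, $a_1,b_1\le q'$), one checks $L(a_1,b_1)=\{0\}$, since an atom below both $a_1$ and $b_1$ would lie below $q'$, hence be orthogonal to every $p_i$, contradicting maximality once more. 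The remaining task is to feed suitably chosen $UL$-terms built from $a,b,q,r$ into the pseudo-orthomodular identity $L(U(L(x,y),y'),y)\approx L(x,y)$ and read off that the ``rogue'' atom $r$ must after all lie below $q$. \emph{This is the step I expect to be the main obstacle.} It is precisely where pseudo-orthomodularity, rather than mere orthomodularity of $\mathbf P$, is indispensable: the finite orthomodular poset of Fig.~3 is atomic and orthocomplete but not a lattice, and what it fails is exactly pseudo-orthomodularity, through a configuration of this very type (a $4$-loop producing an atom below two elements but not below the supremum of the orthogonal atoms common to them).

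Finally, once $\mathbf P$ is known to be a lattice, every $UL$-expression occurring in the pseudo-orthomodular identities collapses to the corresponding lattice term, so those identities become the usual orthomodular identities; thus $\mathbf P$ is an orthomodular lattice, and being moreover orthocomplete it is complete (an orthocomplete orthomodular lattice is complete; see e.g.\ \cite{kalmb83}), which is $(ii)$. I would add the remark that $(iii)\Rightarrow(ii)$ also has a clean self-contained proof avoiding that last citation: under $(iii)$ the atoms of $\BDM(\mathbf P)$ are exactly the $L(p)$ with $p$ an atom of $\mathbf P$, and since an atomic complete orthomodular lattice is atomistic, each $A\in\BDM(\mathbf P)$ is the join of a maximal orthogonal family of such atoms; that family is orthogonal already in $\mathbf P$, its $P$-join exists by orthocompleteness, and (the embedding preserving joins) equals $A$, so $A=L(t)$ for some $t\in P$ and $\mathbf P\cong\BDM(\mathbf P)$ is a complete orthomodular lattice. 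Either way the cycle closes, and the genuinely difficult point is the pseudo-orthomodular computation that eliminates the rogue atom.
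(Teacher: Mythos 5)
Your handling of (ii)$\Rightarrow$(iii), of (iii)$\Rightarrow$(i) via Theorem~\ref{xTheorem4.5}, and of the atomisticity of $\mathbf P$ is correct (the argument via $x=q\vee(q'\wedge x)$ is a legitimate filling-in of what the paper only asserts in passing). But the proposal has a genuine gap, and you have located it yourself: the step in which pseudo-orthomodularity is used to eliminate the rogue atom $r$ is the entire mathematical content of (i)$\Rightarrow$(ii), and you do not carry it out. Everything you do prove in that direction --- atomisticity, the reduction of $q=a\wedge b$ to atoms, $r\not\le q'$, and $L(a_1,b_1)=\{0\}$ --- uses only orthocompleteness, atomicity and orthomodularity; pseudo-orthomodularity has not yet entered, so the implication is unestablished.

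Moreover, it is doubtful that the contradiction can be extracted from the data you have assembled (the pair $a,b$, one maximal orthogonal set of atoms with join $q$, and one atom $r\le a,b$ with $r\not\le q$) by a single application of the identity to terms in these elements. The paper's proof needs strictly more structure: assuming the join $v\vee z$ (equivalently the meet $v'\wedge z'$) does not exist, it produces \emph{two} distinct maximal orthogonal sets of atoms $A_\alpha, A_\beta$ below $\{v',z'\}$, puts $s=\bigvee A_\alpha$ and $x=\bigvee A_\beta$, and forms the four meets $u=s'\wedge v'$, $t=s'\wedge z'$, $w=x'\wedge v'$, $y=x'\wedge z'$, i.e.\ the full order-$4$ loop of Fig.~3 (with the vertices no longer atoms). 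Maximality of $A_\alpha$ forces $u\wedge t=0$, whence every upper bound of $L(x',s')\cup\{s\}$ equals $1$, so $L(U(L(x',s'),s),s')=L(s')$ contains $u$ while $L(x',s')$ does not --- and this is the violation of the identity $L(U(L(x,y),y'),y)\approx L(x,y)$. In your setting the second maximal orthogonal set (one containing $r$) and the four auxiliary meets, together with the verification that two of them meet in $0$, are exactly what you would still have to construct before the identity can be applied. Until that construction is supplied, the theorem is not proved.
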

\begin{proof}\smartqed  (ii)$\implik$ (iii) is evident and (iii)$\implik$ (i) follows 
 by Theorem \ref{xTheorem4.5}.  
 
 (i)$\implik$ (ii): Let  $\mathbf P$ be a pseudo-orthomodular poset and 
 denote the set of all atoms of $\mathbf P$ by $A$. Since 
 $\mathbf P$ is an  orthocomplete atomic orthomodular poset 
 it is atomistic (namely, any element $x$ of  $\mathbf P$ is 
 a join of a maximal orthogonal set of atoms lying under $x$). Let us show that 
 $\mathbf P$ is a lattice. Assume that  $v, z\in P, v, z\not\in \{0,1\}$ (the case when 
 $v\in \{0,1\}$ or $z \in \{0,1\}$ is trivial) and let us prove that 
 $v\vee z$ exists.
 
 Suppose first that there is a maximal orthogonal set of atoms $A_1\subseteq A$ 
 such that $v=\bigvee A_v$, $z=\bigvee A_z$ and $A_v\cup A_z\subseteq A_1$. 
 We show that $v\vee z=\bigvee (A_v\cup A_z)$. Since 
 $\mathbf P$ is  orthocomplete $\bigvee (A_v\cup A_z)$ exists and 
  $v, z\leq \bigvee (A_v\cup A_z)$. Let $c\in P$,  $v, z\leq c$. Then 
  $A_v\leq c$ and $A_z\leq c$. We conclude that $A_v\cup A_z \leq c$ and 
  again by orthocompleteness of  $\mathbf P$ we have 
  $\bigvee (A_v\cup A_z)\leq c$. 
  
  Now assume that there is no maximal orthogonal set of atoms $A_1\subseteq A$ 
 such that $v=\bigvee A_v$, $z=\bigvee A_z$ and $A_v\cup A_z\subseteq A_1$. 
 
  If $v\vee z$ exists then we are finished. Assume that  $v\vee z$  does not 
 exist. From the fact that $\mathbf P$ is an orthomodular poset we have 
 $z\not\leq v'$ (equivalently,  $v\not\leq z'$). 
 
 Since $v\vee z$  does not  exist $v'\wedge z'$  does not  exist as well. 
 Hence there are two different orthogonal sets of atoms 
 $A_{\alpha}$ and   $A_{\beta}$ such that  $A_{\alpha}$ 
 and  $A_{\beta}$ are maximal 
 elements from $\{C\subseteq A\mid C\leq \{v', z'\}, 
 C \text{ orthogonal}\}$, 
  $A_{\alpha}\not\subseteq A_{\beta}$ and  $A_{\beta}\not\subseteq A_{\alpha}$.  
 Put $s=\bigvee A_{\alpha}$ and $x=\bigvee A_{\beta}$. Then 
 $s\not\leq x$,  $x\not\leq s$,
 $v\leq s'$, $v\leq x'$, $z\leq s'$ and   $z\leq x'$. Moreover, 
 $x\not\leq s'$ (equivalently,  $s\not\leq x'$). 
 
  Assume first that $s\vee v=1$. Then $s=v'$. We also 
  have $s\vee z \vee (s'\wedge z')=1$. It follows that 
  $z \vee (s'\wedge z')=v$, i.e., $z\leq v$, a contradiction with $z\not\leq v$. 
  Hence $s\vee v\not= 1$, i.e., $0<{u}=s'\wedge v'<1$. 
  Clearly, ${u}\not\leq  z'$. Otherwise we would 
  have $v'={u}\vee s\leq z'$, i.e., $z\leq v$, a contradiction. 
  By the same arguments we obtain that ${u}\not\leq  x'$. 
  Similarly by symmetry $0<{w}=x'\wedge v'<1$, 
  ${w}\not\leq  z'$ and ${w}\not\leq  s'$, 
   $0<{t}=s'\wedge z'<1$, 
  ${t}\not\leq  v'$ and ${t}\not\leq  x'$ and 
   $0<{y}=x'\wedge z'<1$, 
  ${y}\not\leq  v'$ and ${y}\not\leq  s'$. Hence we obtain the same picture as 
  in {\rm Fig.\ 3} (although the elements need not be atoms).

  Let $c\in L({u},{t})$ be an atom. Then 
  $c\leq {u}=s'\wedge v'\leq s'$,  $c\leq v'$, $c\leq z'$ and $c\not\leq s$. Hence 
   $A_{\alpha}\cup \{c\}\in \{C\subseteq A\mid C\leq \{v', z'\}, 
 C \text{ orthogonal}\}$, a contradiction with the maximality of  $A_{\alpha}$. 
 We have that ${u}\wedge {t}=0$.  Similarly, 
 ${w}\wedge {y}=0$.
  
   We assert that $\mathbf P$ is not pseudo-orthomodular. The reason is: 
  $v, z\in L(x', s')$ and 
  ${u}\not\in L(x', s')$. Let $q$ be any upper bound of the set 
  $\{v, z, s\}$. It follows that $q\geq v\vee s={u}'$ 
  and $q\geq z\vee s={t}'$. Hence 
  $q'\in  L({u},{t})=\{0\}$, i.e. $q=1$. 
  Then  it is easy to see that 
   $U(L(x', s'),  s)=\{1\}$. 
   This shows that   ${u}\in L(U(L(x', s'),  s), s')=L(\{1\}, s')=L(s')$, 
   contradicting our assumptions.  
   
   Therefore every two elements of $\mathbf P$ have a join and 
   $\mathbf P$ is a complete orthomodular lattice. \qed
   \end{proof}
 
 As our final result on orthomodular posets  
 we show that even for  a finite orthomodular poset 
  $\mathbf P$ its Dedekind-MacNeille completion  $\BDM(\mathbf P)$   
  is not orthomodular. This disqualifies these posets for 
  operator left residuation. 
 
 \begin{corollary} \label{dusl2}
 Let $\mathbf P=(P,\leq,{}',0,1)$ be a finite orthomodular poset 
 which is not a lattice. Then its Dedekind-MacNeille completion 
 $\BDM(\mathbf P)$   is not orthomodular.
 \end{corollary}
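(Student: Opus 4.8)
The plan is to obtain the statement as an immediate consequence of Theorem~\ref{orthocomplete}, read contrapositively; the only preliminary point is to check that a finite orthomodular poset satisfies that theorem's hypotheses, namely that it is atomic and orthocomplete.

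First I would record that a finite orthomodular poset $\mathbf P$ is \emph{atomic}: in a finite poset with least element, every $b>0$ lies above a minimal element of $\{x\mid 0<x\le b\}$, which is an atom. Next I would check that $\mathbf P$ is \emph{orthocomplete}. Since every orthogonal subset of a finite poset is finite, it suffices to show that every finite orthogonal set $\{a_1,\dots,a_n\}\subseteq P$ has a join, and this I would prove by induction on $n$ using the defining property of an orthomodular poset that $x\vee y$ exists whenever $x\le y'$: if $b:=a_1\vee\dots\vee a_{n-1}$ exists, then by orthogonality $a_n'$ is an upper bound of $\{a_1,\dots,a_{n-1}\}$, so $b\le a_n'$, whence $b\vee a_n$ exists and equals $a_1\vee\dots\vee a_n$.

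With these in hand I would argue by contradiction. Suppose $\BDM(\mathbf P)$ is orthomodular. Since $\mathbf P$ is an orthocomplete atomic orthomodular poset, the implication (iii)$\implik$(ii) of Theorem~\ref{orthocomplete} shows that $\mathbf P$ is a complete orthomodular lattice; in particular $\mathbf P$ is a lattice, contradicting the hypothesis. Hence $\BDM(\mathbf P)$ is not orthomodular. For finite $\mathbf P$ the assertion ``$\mathbf P$ is a complete orthomodular lattice'' in Theorem~\ref{orthocomplete}(ii) is nothing but ``$\mathbf P$ is a lattice'', as a finite lattice is automatically complete; this is precisely the dichotomy being exploited.

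I do not anticipate a real obstacle: all of the substantive work is already contained in Theorem~\ref{orthocomplete}, and the only step needing any care is the verification of orthocompleteness, whose inductive step is routine. Should one wish to bypass Theorem~\ref{orthocomplete}, one could instead reproduce the argument from its proof showing that a non-lattice finite orthomodular poset forces a Fig.~3-type configuration witnessing failure of pseudo-orthomodularity, and then appeal to Theorem~\ref{xTheorem4.5}; but that route is strictly longer and I would keep it only as a fallback.
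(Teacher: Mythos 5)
Your proof is correct and follows essentially the same route as the paper: the paper also deduces the corollary from Theorem~\ref{orthocomplete} (it passes through Theorem~\ref{xTheorem4.5} to get pseudo-orthomodularity first, which is just the (iii)$\implik$(i)$\implik$(ii) chain rather than your direct (iii)$\implik$(ii), the same content). Your explicit verification that a finite orthomodular poset is atomic and orthocomplete is a worthwhile addition, since the paper applies Theorem~\ref{orthocomplete} without checking its hypotheses.
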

 \begin{proof} \smartqed Assume that $\BDM(\mathbf P)$   is  orthomodular. 
 From Theorem  \ref{xTheorem4.5} we have that $\mathbf P$ is 
  pseudo-orthomodular. From Theorem  \ref{orthocomplete} we obtain that 
   $\mathbf P$ is a lattice, a contradiction. \qed
 \end{proof}

 \begin{corollary} Any non-lattice Greechie logic does not possess 
  an orthomodular Dedekind-MacNeille completion.
  \end{corollary}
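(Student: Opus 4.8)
The plan is to reduce this to Theorem~\ref{orthocomplete}, exactly as Corollary~\ref{dusl2} disposes of the finite case; the only extra work is to drop finiteness for a general Greechie logic. So let $\mathbf P=(P,\leq,{}',0,1)$ be a non-lattice Greechie logic and assume, for a contradiction, that $\BDM(\mathbf P)$ is orthomodular.

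First I would check that $\mathbf P$ is an orthocomplete atomic orthomodular poset. Atomicity, indeed atomisticity, is immediate, since every element of a Greechie logic is the supremum of a finite orthogonal set of atoms. For orthocompleteness, let $S\subseteq P$ be orthogonal and write each nonzero $s\in S$ as $s=\bigvee A_s$ with $A_s$ a finite orthogonal set of atoms. The sets $A_s$ are pairwise disjoint (a common atom $a\in A_s\cap A_t$ would satisfy $a\leq s\leq t'$ and $a\leq t$, whence $a\in L(t,t')=\{0\}$), and for $s\neq t$ the relation $s\leq t'$ gives $a\leq s\leq t'\leq b'$ for all $a\in A_s$, $b\in A_t$, so $A:=\bigcup_{s\in S}A_s$ is a set of pairwise orthogonal atoms. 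The decisive point is the combinatorial fact that a set of pairwise orthogonal atoms of a Greechie logic lies inside a single block: two distinct orthogonal atoms belong to a common block (this is built into the construction of the orthomodular poset from its Greechie diagram), and if three pairwise orthogonal atoms lay in three pairwise distinct blocks, then those blocks together with the three atoms would form a loop of order~$3$, forbidden by condition~(5) of Definition~\ref{D:greechie-diagram}, while condition~(4) reduces the remaining configurations to this case; a short induction then confines all of $A$ to a single block~$B$. Since blocks of a Greechie logic are finite, $A$ is finite, hence so is $S$, and $\bigvee A$ exists in $B$; as block-suprema agree with suprema in $\mathbf P$ and $S$ and $A$ have the same upper bounds, $\bigvee S$ exists in $\mathbf P$. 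Thus $\mathbf P$ is orthocomplete.

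Now Theorem~\ref{orthocomplete} applies to $\mathbf P$: its implication (iii)$\implik$(ii) turns the assumption that $\BDM(\mathbf P)$ is orthomodular into the statement that $\mathbf P$ is a complete orthomodular lattice, contradicting the hypothesis that $\mathbf P$ is not a lattice; hence $\BDM(\mathbf P)$ is not orthomodular, as claimed. I expect the orthocompleteness step to be the only real obstacle, and within it the lemma that a pairwise orthogonal family of atoms of a Greechie logic is confined to one finite block --- this is precisely where conditions~(4) and~(5) of a Greechie diagram enter; everything afterwards is a formal appeal to Theorem~\ref{orthocomplete}.
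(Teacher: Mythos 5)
Your argument is correct and follows the route the paper clearly intends (the corollary is stated without proof there): reduce to Theorem~\ref{orthocomplete} via its implication (iii)$\implik$(ii), after verifying that a Greechie logic is an orthocomplete atomic orthomodular poset. The one substantive piece you add --- that any pairwise orthogonal set of atoms lies in a single finite block, using condition (4) and the absence of loops of order $3$, whence every orthogonal subset is finite and has a join inside that block --- is exactly the detail the paper leaves implicit, and your verification of it is sound.
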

  
  \begin{proposition}\label{atomicpseudo}
   Let  $\mathbf P=(P,\leq,{}',0,1)$ be an  atomic  pseudo-orthomodular poset. 
   Then any element of\/ $\mathbf P$ is a join of an orthogonal set of atoms lying under it and 
   $\mathbf P$  is an atomistic poset. 
  \end{proposition}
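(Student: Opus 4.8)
The plan is to combine a Zorn's-lemma argument with a single well-chosen instance of the pseudo-orthomodular law. Fix $a\in P$. If $a=0$ there is nothing to prove ($0$ is the join of the empty set of atoms), so assume $a>0$. The collection of all orthogonal subsets of $P$ whose members are atoms lying below $a$, ordered by inclusion, is nonempty (by atomicity it even contains a singleton) and closed under unions of chains, because orthogonality is a condition on pairs. Hence Zorn's lemma provides a maximal such set, which I denote $A_a$.

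The next step is to read off from the maximality of $A_a$ the fact that $U(A_a)\cap U(a')=\{1\}$; that is, $1$ is the only common upper bound of $a'$ and of all the atoms in $A_a$. Indeed, if $c_0\neq 1$ were another such upper bound, then $c_0'>0$, $c_0'\le a$ (as $a'\le c_0$) and $c_0'\le q'$ for every $q\in A_a$ (as $q\le c_0$); by atomicity there is an atom $p\le c_0'$, and then $p\le a$ and $p\le q'$ for all $q\in A_a$. Moreover $p\notin A_a$, for otherwise $p\le p'$ would force $p\in L(p,p')=\{0\}$; hence $A_a\cup\{p\}$ is a strictly larger orthogonal set of atoms below $a$, contradicting maximality.

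Now I claim $\bigvee A_a$ exists in $\mathbf P$ and equals $a$. Since $a$ is an upper bound of $A_a$, it is enough to check $a\le c$ for an arbitrary $c\in U(A_a)$. For this I apply the pseudo-orthomodular identity $L(U(L(x,y),y'),y)\approx L(x,y)$ with $x:=c$ and $y:=a$, which gives $L(U(L(c,a),a'),a)=L(c,a)$. Since $A_a\subseteq L(c,a)$, every upper bound of $L(c,a)$ is an upper bound of $A_a$, so $U(L(c,a),a')\subseteq U(A_a)\cap U(a')=\{1\}$ by the previous step; as $1$ lies in this set in any case, $U(L(c,a),a')=\{1\}$. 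Plugging this back in yields $L(c,a)=L(\{1\},a)=L(a)$, whence $a\in L(c,a)$ and $a\le c$, as desired. Thus $a=\bigvee A_a$ is the join of the orthogonal set of atoms $A_a$ lying beneath it; since $a$ was arbitrary, $\mathbf P$ is in particular atomistic.

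I expect the crux to be spotting the right substitution in the pseudo-orthomodular law. Naive choices --- letting $y$ range over single atoms, or first trying to show directly that $L(a,c')\neq\{0\}$ --- give no information, whereas feeding the upper bound $c$ into the first slot and $a$ itself into the second makes the maximality fact about $U(A_a)\cap U(a')$ collapse the inner cone $U(L(c,a),a')$ to $\{1\}$, which is exactly what forces $L(c,a)=L(a)$. Everything else is routine manipulation of upper and lower cones.
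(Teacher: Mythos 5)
Your proof is correct and follows essentially the same route as the paper's: take a maximal orthogonal set $A_a$ of atoms below $a$, use atomicity plus maximality to show $U(A_a,a')=\{1\}$, and then apply the pseudo-orthomodular identity (with the upper bound in the first slot and $a$ in the second) to collapse $U(L(c,a),a')$ to $\{1\}$ and conclude $L(c,a)=L(a)$. The only cosmetic difference is that you make the Zorn's-lemma step explicit.
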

\begin{proof} \smartqed Assume that $x\in P$ and let 
$A_x$ be a maximal orthogonal set of atoms under $x$. 
Clearly, $x\in U(A_x)$. Let $y\in U(A_x)$. We have to show that 
$x\leq y$. Evidently, $A_x\subseteq L(x, y)$. We conclude that 
$U(A_x, x')=\{1\}$. Namely, let $q\in U(A_x, x'),$ $q\not=1$. Then there is 
an atom $a\in P$ such that $a'\in U(A_x, x'), a'\geq q$. 
Consequently, $a\leq x$ and $a\leq b'$ for all $b\in A_x$, a contradiction 
with the maximality of $A_x$.

We conclude that $U(L(x, y), x')=\{1\}$, hence 
$ L(x, y)=L(U(L(x, y), x'),x)=L(\{1\}, x)=L(x)$, i.e., $x\leq y$. 
 \qed
 \end{proof}
 
 \begin{remark}\label{finrem}
 Recall that Finch \cite[Proposition  (3.2).]{Finch} has 
 shown, for a complemented poset $\mathbf P$,  that  its Dedekind-MacNeille completion  $\BDM(\mathbf P)$ 
 is orthomodular  if and only  if  for any non-empty  subset  $X$  of $\mathbf P$ 
 and any maximal orthogonal  subset $S$ of $LU(X)$ one has 
 $LU(S) =  LU(X)$.
 \end{remark}

 In Corollary \ref{dusl2} we proved that no finite non-lattice 
 orthomodular poset has an orthomodular Dedekind-MacNeille completion. 
 This is the reason why we have to modify  
 the definition of  orthomodularity in posets to obtain a more favorable result. It turns out that our concept of a pseudo-orthomodular poset can serve for this reason. Hence, we prove the following.
 
 \begin{theorem}\label{apfinch}
  Let  $\mathbf P=(P,\leq,{}',0,1)$ be an atomic pseudo-orthomodular poset with finite rank.  
  Then $\BDM(\mathbf P)$   is  orthomodular. 
 \end{theorem}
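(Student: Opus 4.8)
The plan is to verify the criterion of Finch recalled in Remark~\ref{finrem} (note that $\mathbf P$, being pseudo-orthomodular, is in particular complemented): it suffices to show that for every non-empty $X\subseteq P$ and every maximal orthogonal subset $S$ of $LU(X)$ one has $LU(S)=LU(X)$. I will first reduce both $X$ and $S$ to sets of atoms, and then settle the resulting ``atomic'' statement. Throughout, $A$ denotes the set of atoms of $\mathbf P$.

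\emph{Reducing $X$ and $S$ to atoms.} Put $A_X:=A\cap LU(X)$. If $p\in LU(X)$ then, by Proposition~\ref{atomicpseudo}, $p$ is the join of an orthogonal set $B_p$ of atoms lying under it, and since $LU(X)$ is a down-set we get $B_p\subseteq A_X$. Applying this to each $x\in X$ gives $U(A_X)\subseteq U(X)$; the reverse inclusion holds because $A_X\subseteq LU(X)$, so $LU(A_X)=LU(X)$. The same computation, performed with an arbitrary closed set in place of $LU(X)$, shows that every closed set $C$ satisfies $C=LU(C\cap A)$; in particular $\BDM(\mathbf P)$ is atomistic and a closed set is determined by the atoms it contains. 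Now let $S=\{s_1,\dots,s_n\}$, which is finite since $\mathbf P$ has finite rank, and write $s_i=\bigvee B_i$ with $B_i\subseteq A_X$ a finite orthogonal set of atoms. Put $T:=\bigcup_{i=1}^n B_i$. Orthogonality of $S$ makes the $B_i$ pairwise disjoint and $T$ orthogonal, the identity $U(B_i)=U(s_i)$ gives $LU(T)=LU(S)$, and $T$ is maximal among orthogonal sets of atoms contained in $A_X$: an atom $c\in A_X$ orthogonal to every element of $T$ is orthogonal to every $s_i$ and lies in $LU(X)\setminus S$, contradicting maximality of $S$. Thus everything reduces to the claim: \emph{if $T$ is a maximal orthogonal set of atoms contained in $A_X$, then $LU(T)=LU(A_X)$.}

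\emph{The core step.} Since $LU(T)\subseteq LU(A_X)$ and closed sets are determined by their atoms, it suffices to prove that every atom $a\in A_X$ lies in $LU(T)$. Suppose some atom $a\in A_X$ does not; then there is $q\in U(T)$ with $a\not\leq q$, and by maximality of $T$ there is $t\in T$ with $a\not\leq t'$ (otherwise $T\cup\{a\}$ would be a strictly larger orthogonal set of atoms inside $A_X$). The aim is to produce an element $y\in P$ with $a\leq y$ and $a\in LU\bigl(L(q,y)\cup\{y'\}\bigr)$; for such a $y$ the pseudo-orthomodular identity $L(U(L(q,y),y'),y)=L(q,y)$ forces $a\in L(q,y)$, hence $a\leq q$, which is the desired contradiction. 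The element $y$ should be built from $q$, $a$ and the finitely many elements of $T$ by the kind of bookkeeping used in the proof of Theorem~\ref{orthocomplete} (the elements denoted there $s,x,u,w,t,y$), with finite rank playing the role that orthocompleteness played there — it guarantees that only finitely many atoms are in play — and with the maximality of $T$ forcing the ``Fig.~3'' configuration that the pseudo-orthomodular law then rules out.

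I expect this last construction to be the main obstacle. The subtlety is that finite orthogonal joins need not exist in $\mathbf P$, so one cannot simply take $y=\bigvee\bigl(\{a\}\cup(T\setminus\{t\})\bigr)$; the candidate for $y$ must be chosen inside $\mathbf P$ so that the pseudo-orthomodular identity — which is an identity of $\mathbf P$, not yet of $\BDM(\mathbf P)$ — is applicable. A careful case analysis of how $a$ fails to be orthogonal to $T$, combined with atomicity and finite rank, should furnish the required $y$ and complete the proof; the two reductions above and the deduction of Finch's condition are routine.
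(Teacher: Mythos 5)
There is a genuine gap, and you flag it yourself: the ``core step'' is never carried out. Your two reductions (replacing $X$ by $A_X=A\cap LU(X)$ and $S$ by a maximal orthogonal set of atoms $T$ with $LU(T)=LU(S)$) are correct but not where the difficulty lies; the paper does not even bother with the second one and works with $S=\{s_1,\dots,s_k\}$ directly. What is missing is precisely the argument that a ``bad'' atom $a\leq LU(X)$ with $a\not\in LU(S)$ cannot exist, and your proposed route --- find a single $y\in P$ with $a\leq y$ and $a\in LU(L(q,y)\cup\{y'\})$ and apply the pseudo-orthomodular law once to the pair $(q,y)$ --- is not the right shape of argument: the obstruction is distributed over the whole finite family $S$, not concentrated in one element, and no candidate for $y$ is exhibited (as you note, orthogonal joins need not exist in $\mathbf P$).

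The paper's idea, which you would need to supply, is an extremal argument over the indices of $S$. Among all bad atoms $a$ (i.e.\ $a\leq LU(X)$, $a\not\leq\bigvee_{\BDM(\mathbf P)}S$), let $l$ be the largest index such that some bad atom satisfies $a\leq s_1',\dots,a\leq s_{l-1}'$ and $a\not\leq s_l'$; this $l$ exists because $S$ is finite (finite rank) and no bad atom can be orthogonal to all of $S$ (maximality of $S$). Fix such an $a$ and apply the pseudo-orthomodular identity to the pair $(a,s_l)$: from $U(L(U(a,s_l),s_l'),s_l)=U(a,s_l)$ and the fact that $LU(a,s_l)\not\subseteq LU(S)$ while $s_l\in LU(S)$, one extracts (using atomisticity, i.e.\ Proposition~\ref{atomicpseudo}) an atom $b\in L(U(a,s_l),s_l')$ that is still bad; since $a,s_l\leq s_i'$ for $i<l$ forces $LU(a,s_l)\subseteq L(s_i')$, this $b$ satisfies $b\leq s_1',\dots,b\leq s_l'$, contradicting the maximality of $l$. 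This single inductive step is the entire content of the theorem, and without it your write-up is a correct reduction of the problem to itself rather than a proof.
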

 \begin{proof} \smartqed By Remark \ref{finrem} it is enough to check that for 
  any non-empty  subset  $X$  of $\mathbf P$ 
 and any maximal orthogonal  subset $S$ of $LU(X)$ one has 
 $LU(S) =  LU(X)$.
 
 Assume that  $X\subseteq P$, $X\not=\emptyset$ and 
  $S\subseteq LU(X)$, $S$ maximal orthogonal. Since $\mathbf P$  
  has finite rank,  $S$ is finite; let $S=\{s_1, \dots, s_k\}$. 
  Put $d_S=\bigvee_{\BDM(\mathbf P)} S$. Then $d_S \leq LU(X)$. If $d_S=LU(X)$ we are done. 
  Suppose that $d_S<LU(X)$.  From Proposition \ref{atomicpseudo} we know that 
  $\mathbf P$ is atomistic. Since any element of $\BDM(\mathbf P)$ is 
  a join of elements of  $\mathbf P$ also $\BDM(\mathbf P)$ is atomistic with the same set of atoms. We conclude that there is 
  an atom $a\in P$ such that $a\not\leq d_S$ and $a\leq LU(X)$.
  
  We put 
  $$
  \begin{array}{r c l}
  l_S&=&\max\{j\in \{2, \dots, k\}\mid a\in P \text{ is an atom}, a\not\leq d_S, a\leq LU(X), \\
  & &\phantom{\max\{j\in \{2, \dots, k\}\mid }\ a\leq s_1', \dots, a\leq s_{j-1}', a\not\leq s_j'\}.
  \end{array}
  $$
  Note that $l_S$ is correctly defined since by maximality of $S$ there is no atom $a$ such that $a\leq LU(X)$ and 
  $a\leq s_1', \dots, a\leq s_{k}'$. Let $a\in P$ be an atom of   $\mathbf P$ such that  $a\not\leq d_S$, $a\leq LU(X)$, 
  $ a\leq s_1', \dots, a\leq s_{l_S-1}',$ $a\not\leq s_{l_S}'$.

  We have $LU(a,s_{l_S})=LU(L(U(a,s_{l_S}),s_{l_S}'),s_{l_S})\leq L(s_1', \dots,  s_{l_S-1}')$ since 
   $\mathbf P$  is pseudo-orthomodular  and 
  both $a$ and $s_{l_S}$ are in $L(s_1', \dots,  s_{l_S-1}')$. Moreover, 
  $LU(a,s_{l_S})\leq LU(X)$ and $LU(a,s_{l_S})\not\leq d_S$ since $a,s_{l_S}\leq LU(X)$ 
  and $a\not\leq d_S$. We conclude that 
  there is an atom $b$ of   $\mathbf P$ such that  $b\in L(U(a,s_{l_S}),s_{l_S}')$, 
  $b\not\leq d_S$, $b\leq LU(X)$, $ b\leq s_1', \dots, b\leq s_{l_S-1}',$ $b\leq s_{l_S}'$, a contradiction 
  with the maximality of $l_S$. Hence  $d_S=LU(X)$ and $\BDM(\mathbf P)$   is  orthomodular. 
  \qed
 \end{proof}
 
 % \begin{corollary}\label{fiapfinch}
  %Let  $\mathbf P=(P,\leq,{}',0,1)$ be a finite pseudo-orthomodular poset.  
  %Then $\BDM(\mathbf P)$   is  orthomodular. 
 %\end{corollary}

 Getting together the previous results we can formulate the following 
 corollary which is a full analogy for  finite pseudo-orthomodular posets to the results 
 on Boolean or relatively pseudo-complemented posets as stated in Theorem \ref{boolpos} 
 or Theorem \ref{relpspos}, respectively. Hence, we conclude
 
 \begin{corollary}\label{finchmhosum}Let $\mathbf P=(P,\leq,{}',0,1)$ be  a 
 finite pseudo-orthomodular poset,
$M(x,y) = L(U(x,y'),y)$  and $R(x,y) = LU(L(x,y),x')$. 
Then $\BDM({\mathbf P})$ is a complete orthomodular lattice.  Moreover, 
 $\BDM(\mathbf P)$ is  a left residuated lattice with respect to $\odot$ and $\to$ reached by the 
 $\BDM$-transformation from $M$ and $R$, respectively.  
\end{corollary}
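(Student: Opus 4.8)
The plan is to obtain this corollary as a direct consequence of Theorem~\ref{apfinch}, together with the fact recalled after Corollary~\ref{cccdmhosum} that every orthomodular lattice is left residuated with respect to $x\odot y=(x\vee y')\wedge y$ and $x\to y=(x\wedge y)\vee x'$ (see \cite{CLRela}). So essentially nothing new has to be proved; one only has to check that the finiteness hypothesis activates Theorem~\ref{apfinch} and that the $\BDM$-transforms of $M$ and $R$ are exactly these two operations.

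First I would verify the hypotheses of Theorem~\ref{apfinch}. Since $\mathbf P$ is finite and has a least element $0$, every $b\in P$ with $b>0$ admits an atom below it: just pick a minimal element of the non-empty finite set $\{x\in P\mid 0<x\leq b\}$. Hence $\mathbf P$ is atomic. Moreover, every orthogonal subset of $\mathbf P$ is finite simply because $P$ is finite, so $\mathbf P$ has finite rank. Being also pseudo-orthomodular, $\mathbf P$ therefore satisfies all the assumptions of Theorem~\ref{apfinch}, which yields that $\BDM(\mathbf P)$ is orthomodular; since $\BDM$ of any poset is a complete lattice, $\BDM(\mathbf P)$ is a complete orthomodular lattice, which is the first assertion.

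For the residuation statement I would then compute the $\BDM$-transforms explicitly. In $M(x,y)=L(U(x,y'),y)$ the inner subexpression $U(x,y')$ is replaced by $x\vee y'$, and then $L(x\vee y',y)$ is replaced by $(x\vee y')\wedge y$; thus $x\odot y=(x\vee y')\wedge y$ on $\BDM(\mathbf P)$. Likewise, in $R(x,y)=LU(L(x,y),x')$ the inner subexpression $L(x,y)$ is replaced by $x\wedge y$, and then $LU(x\wedge y,x')$ is replaced by $(x\wedge y)\vee x'$; thus $x\to y=(x\wedge y)\vee x'$. These are precisely the operations witnessing left residuation in any orthomodular lattice: they satisfy $x\odot 1\approx x\approx 1\odot x$ and $x\odot y\leq z$ if and only if $x\leq y\to z$, i.e.\ (\ref{requ1}) and (\ref{requ2}). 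As $\BDM(\mathbf P)$ is a complete orthomodular lattice, it is therefore a left residuated lattice with respect to the $\BDM$-transformed operations $\odot$ and $\to$, which finishes the proof. The corollary carries no genuine obstacle of its own — the substance, namely orthomodularity of the completion, has already been established in Theorem~\ref{apfinch} via Finch's criterion from Remark~\ref{finrem}; everything else here is a routine matching of terms.
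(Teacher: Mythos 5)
Your proposal is correct and follows exactly the route the paper intends (the paper leaves the proof implicit, stating only that the corollary results from ``getting together the previous results''): finiteness gives atomicity and finite rank, so Theorem~\ref{apfinch} yields orthomodularity of the complete lattice $\BDM(\mathbf P)$, and the left residuation then comes from the cited fact that every orthomodular lattice is left residuated with respect to $x\odot y=(x\vee y')\wedge y$ and $x\to y=(x\wedge y)\vee x'$, which are precisely the $\BDM$-transforms of $M$ and $R$. Your explicit verification of the hypotheses of Theorem~\ref{apfinch} is a useful addition but introduces nothing beyond what the paper assumes.
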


The next definition and theorem are suggested by a similar result of 
Niederle for Boolean posets (\cite[Theorem 17]{Niederle}).

\begin{definition}\label{doubly} 
Let $\mathbf P=(P,\leq,{}',0,1)$ be a complemented poset.
A  subset  $X$  of  $P$ is  {\em complement-closed and doubly dense in}  
 $\mathbf P$ if the following conditions are satisfied: 
 
 \begin{enumerate}[{\rm(i)}]
\item $(\forall a  \in P) (a =  \bigvee_{\mathbf P}(L(a)  \cap X)  =  %
\bigwedge_{\mathbf P}(U(a) \cap X)$, 
\item $x\in X \implik x'\in X$, 
\item $0, 1\in X$.
\end{enumerate}
\end{definition}

\begin{remark}\label{ddcd}
Recall that any  complement-closed and doubly dense subset $X$ in  $\mathbf P$ is 
a complemented poset with induced order and complementation. Moreover, 
if $\mathbf P=(P,\leq,$ ${}',0,1)$ is a complemented poset then 
$P$ is a  complement-closed and doubly dense subset in its 
Dedekind-MacNeille completion $\BDM(\mathbf P)$. This can be shown by the same 
arguments as in (\cite[Theorem 16]{Niederle}) or can be directly deduced 
from (\cite[Theorem 2.5]{Laren}) so we omit it.
\end{remark}

\begin{theorem}\label{Embnied}
 {\em Embedding  theorem  for finite  pseudo-orthomodular posets.}  \\
  Finite  pseudo-orthomodular posets are precisely  complement-closed 
  and doubly dense subsets of  finite  orthomodular lattices. 
  \end{theorem}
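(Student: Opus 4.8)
The plan is to prove the two inclusions of the claimed equality of classes separately. For the easy direction, suppose $\mathbf P$ is a finite pseudo-orthomodular poset. By Theorem~\ref{apfinch} (note a finite poset is automatically atomic and of finite rank, the latter because every orthogonal subset is a subset of the finite set $P$), $\BDM(\mathbf P)$ is orthomodular; since $P$ is finite, $\BDM(\mathbf P)$ is a finite orthomodular lattice. By Remark~\ref{ddcd}, $P$ is a complement-closed and doubly dense subset of $\BDM(\mathbf P)$. So every finite pseudo-orthomodular poset arises this way.

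For the converse, let $\mathbf L$ be a finite orthomodular lattice and let $X \subseteq L$ be a complement-closed and doubly dense subset, viewed as a complemented poset $\mathbf X=(X,\leq,{}',0,1)$ with the induced order and complementation (Remark~\ref{ddcd}). The first step is to identify $\BDM(\mathbf X)$ with $\mathbf L$: since $X$ is doubly dense in $\mathbf L$ in the sense of Definition~\ref{doubly}(i), $X$ is both join-dense (supremum-dense) and meet-dense (infimum-dense) in the complete lattice $\mathbf L$, so by Schmidt's characterization recalled in the excerpt, $\mathbf L \cong \BDM(\mathbf X)$, and the isomorphism is identity on $X$ and preserves $0,1$. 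One then checks that this order isomorphism also preserves the complementation: for $a \in X$, the complement $a'$ computed in $\mathbf X$ equals the complement computed in $\mathbf L$ (these agree by definition of the induced complementation), and for a general element $b = \bigvee_{\mathbf L}(L(b)\cap X)$ of $\mathbf L \cong \BDM(\mathbf X)$, the orthocomplementation on $\BDM(\mathbf X)$ is the De Morgan dual, which matches $b'$ in $\mathbf L$ because $X$ is complement-closed and $'$ is an antitone involution. Thus $\mathbf L$, as an orthomodular lattice with orthocomplementation, is (isomorphic to) $\BDM(\mathbf X)$.

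Now apply Theorem~\ref{xTheorem4.5}: $\mathbf X$ is a complemented poset whose Dedekind-MacNeille completion $\BDM(\mathbf X) \cong \mathbf L$ is an orthomodular lattice, hence $\mathbf X$ is pseudo-orthomodular. Since $X$ is finite (being a subset of the finite set $L$), $\mathbf X$ is a finite pseudo-orthomodular poset. This completes the converse inclusion, and the two inclusions together give the stated characterization.

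The main obstacle I anticipate is the bookkeeping in the converse direction: one must verify carefully that the complementation of the abstract poset $\mathbf X$, the induced complementation from $\mathbf L$, and the orthocomplementation of $\BDM(\mathbf X)$ all coincide under the Schmidt isomorphism, so that ``orthomodular lattice'' in the hypothesis is genuinely the structure fed into Theorem~\ref{xTheorem4.5}. The double-density hypothesis in Definition~\ref{doubly}(i) is exactly what makes the Schmidt criterion applicable, and conditions (ii)--(iii) (complement-closedness and $0,1 \in X$) are exactly what is needed to transport the orthocomplementation; once these compatibilities are pinned down, the rest follows formally from Theorem~\ref{apfinch}, Theorem~\ref{xTheorem4.5}, and Remark~\ref{ddcd}.
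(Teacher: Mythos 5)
Your proof is correct, and the forward direction coincides with the paper's (the paper cites Corollary~\ref{finchmhosum}, which is itself the specialization of Theorem~\ref{apfinch} to finite posets, plus Remark~\ref{ddcd}). For the converse, however, you take a genuinely different route. The paper never identifies $\mathbf L$ with $\BDM(\mathbf X)$: it works entirely inside $\mathbf L$ and verifies the pseudo-orthomodular identity $U(L(U(x,y),y'),y)=U(x,y)$ in $\mathbf X$ directly, by a chain of equivalences for $a\in U(x,y)$ that uses the orthomodular law of $\mathbf L$ together with double density to translate $((x\vee_{\mathbf L}y)\wedge_{\mathbf L}y')\leq a$ into the condition that every $z\in L_{\mathbf X}(U_{\mathbf X}(x,y),y')$ lies below $a$. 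You instead invoke Schmidt's characterization to get $\BDM(\mathbf X)\cong\mathbf L$, transport the orthocomplementation along this isomorphism, and then quote Theorem~\ref{xTheorem4.5}. Your route is more conceptual and reuses Theorem~\ref{xTheorem4.5} rather than redoing its computation, and it works verbatim without the finiteness hypothesis (it is essentially the natural converse argument for Theorem~\ref{AllEmbnied} as well); the cost is the compatibility check you flag, which is genuinely needed and not entirely formal: one must verify that the canonical orthocomplementation on $\BDM(\mathbf X)$ (the MacLaren one, $B\mapsto\{x\in X\mid x\leq b'\ \forall b\in B\}$) corresponds under the Schmidt isomorphism to $'$ on $\mathbf L$. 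This does hold --- $x\leq b'$ for all $b\in B$ iff $x\leq(\bigvee_{\mathbf L}B)'$, so the image of $B^\perp$ is $\bigvee_{\mathbf L}\bigl(L\bigl((\bigvee_{\mathbf L}B)'\bigr)\cap X\bigr)=(\bigvee_{\mathbf L}B)'$ by Definition~\ref{doubly}(i) --- but you should spell out this computation rather than gesture at De Morgan duality. The paper's direct calculation sidesteps this bookkeeping entirely, at the price of a longer equivalence chain.
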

  \begin{proof} \smartqed We have just proved in Corollary \ref{finchmhosum} that every  finite  pseudo-orthomodular posets has  a 
  finite  orthomodular Dedekind-MacNeille completion. Hence it is 
  a  complement-closed and doubly dense subset of a  finite  orthomodular lattice.
  Conversely, let 
  $\mathbf P=(P,\leq,{}',0,1)$ be  a  complement-closed and doubly dense subset of 
  a  finite  orthomodular lattice $(L,\wedge, \vee, {}',0,1)$.  Then 
  $\mathbf P$ is a finite complemented poset. Let us show that $\mathbf P$ is 
   pseudo-orthomodular. Let $x, y\in P$. We can proceed similarly as in Theorem 
   \ref{xTheorem4.5}.   Let $a\in P$.
   We have:
$$
\begin{array}{r c l}
a\in U(x,y) &\ekviv& x, y\leq a  \ekviv x\vee_{\mathbf L}y\leq a%
 \ekviv ((x\vee_{\mathbf L} y)\wedge_{\mathbf L} y')%
\vee_{\mathbf L} y\leq a\\ %
&\ekviv& ((x\vee_{\mathbf L} y)\wedge_{\mathbf L} y') \leq a \text{ and } 
 y\leq a\\
 &\ekviv& \left((\forall z\in P) 
 (z\leq x\vee_{\mathbf L} y \text{ and } z\leq y') \implies  z\leq a\right) \text{ and } 
 a\in U(y)\\
  &\ekviv& \left((\forall z\in P) 
 (z\leq U(x, y) \text{ and } z\in L(y')) \implies  z\leq a\right)  \text{ and } 
 a\in U(y)\\
 &\ekviv& \left((\forall z\in P) 
 (z\in L(U(x, y), y')) \implies  z\leq a\right)  \text{ and } 
 a\in U(y)\\
 &\ekviv& a\in U(L(U(x, y), y'))  \text{ and } 
 a\in U(y)\\
&\ekviv& a\in U(L(U(x,y),y'),y).
\end{array}
$$
We conclude that $U(L(U(x,y),y'),y) = U(x,y)$, i.e., $\mathbf P$ is 
 pseudo-orthomodular.\qed
  \end{proof}

Motivated by a paper \cite{riecanova} we introduce the following definition. 

\begin{definition}\label{xDefinition2.1} {\rm Let $\mathbf P=(P,\leq,{}',0,1)$ be a complemented poset.  
Then $\mathbf P$ is
called {\em strongly $D$-continuous} if and only if for all $B,C\subseteq P$ 
with $B\le C$ the following condition is satisfied:
\begin{enumerate}[{\rm (SDC)}]
\item $\bigwedge\nolimits_{\mathbf P} \{g\in P \mid g\in C \text{ or } g'\in B\}=0$ if and only if 
every lower bound of
$C$ is under every upper bound of $B$.
\end{enumerate}}
\end{definition}

\begin{remark} Recall that the implication: 

\medskip
If $B,C\subseteq P$ 
for a complemented poset  $\mathbf P$ are such that $B\le C$ then 
$\{a\in P\mid a\le C\}\le\{d\in P\mid B\le d\}$ implies that
$\bigwedge\nolimits_{\mathbf P} \{g\in P \mid g\in C \text{ or } g'\in B\}=0$ 
\medskip

\noindent{}from the condition (SDC) is valid in any 
complemented poset  $\mathbf P$ 
since it follows from the fact that the Dedekind-MacNeille completion of 
a  complemented poset is 
always complemented (see \cite[Theorem 2.3., Theorem 2.4.]{Laren}). 
This fact was 
explained and used for Boolean posets in \cite{Halas}. 
\end{remark}

In the following, we establish a characterization of 
 complemented posets with orthomodular 
Dedekind-MacNeille completion. 

\begin{theorem}\label{xTheorem4.4}  Let $\mathbf P=(P,\leq,{}',0,1)$ 
be a complemented poset.  
$\mathbf P$  has an orthomodular 
Dedekind-MacNeille completion if and only if\/ $\mathbf P$  is 
a strongly $D$-continuous pseudo-ortho\-mo\-dular poset.
\end{theorem}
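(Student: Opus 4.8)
The plan is to establish the two implications separately, using the characterization of orthomodularity of $\BDM(\mathbf P)$ via Finch's condition (Remark \ref{finrem}), Theorem \ref{xTheorem4.5}, and the basic fact recorded earlier that for $B\le C$ one has $\bigvee_{\BDM(\mathbf P)}B=\bigwedge_{\BDM(\mathbf P)}C$ precisely when $\{a\in P\mid a\le C\}\le\{d\in P\mid B\le d\}$. I would also use that the Dedekind–MacNeille completion of a complemented poset is complemented, so that the ``easy'' direction of (SDC) always holds (as noted in the Remark after Definition \ref{xDefinition2.1}); the content of (SDC) is really the forward implication.

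\textbf{Direction 1: orthomodular completion $\implies$ strongly $D$-continuous pseudo-orthomodular.}
Suppose $\BDM(\mathbf P)$ is orthomodular. Then $\mathbf P$ is pseudo-orthomodular by Theorem \ref{xTheorem4.5}, so only (SDC) needs proof. Fix $B,C\subseteq P$ with $B\le C$ and assume $\bigwedge_{\mathbf P}\{g\in P\mid g\in C\text{ or }g'\in B\}=0$. Set $u=\bigvee_{\BDM(\mathbf P)}B$ and $v=\bigwedge_{\BDM(\mathbf P)}C$; then $u\le v$ in $\BDM(\mathbf P)$. The complement $u'$ in $\BDM(\mathbf P)$ is (the $LU$-closure of) $\{g\in P\mid g'\in B\}$, so the set $\{g\in P\mid g\in C\text{ or }g'\in B\}$ generates, via $LU$, the element $v\wedge u'$ of $\BDM(\mathbf P)$; the hypothesis says its meet in $\mathbf P$ is $0$, and since $\mathbf P$ is join/meet-dense in $\BDM(\mathbf P)$ this forces $v\wedge u'=0$ in $\BDM(\mathbf P)$. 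By the orthomodular law in the form ``$u\le v$ and $u'\wedge v=0$ imply $u=v$'' (recorded in the excerpt after Proposition \ref{niederle}), we get $u=v$, i.e.\ $\bigvee_{\BDM(\mathbf P)}B=\bigwedge_{\BDM(\mathbf P)}C$, which by the displayed equivalence early in the excerpt means exactly that every lower bound of $C$ lies under every upper bound of $B$. The converse half of (SDC) is the ``easy'' implication already granted.

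\textbf{Direction 2: strongly $D$-continuous pseudo-orthomodular $\implies$ orthomodular completion.}
Now assume $\mathbf P$ is pseudo-orthomodular and strongly $D$-continuous; I would verify Finch's criterion from Remark \ref{finrem}: for every nonempty $X\subseteq P$ and every maximal orthogonal $S\subseteq LU(X)$, $LU(S)=LU(X)$. Put $u=\bigvee_{\BDM(\mathbf P)}S$ and $v=LU(X)=\bigwedge_{\BDM(\mathbf P)}U(X)$; clearly $u\le v$. Maximality of the orthogonal set $S$ inside $LU(X)$ should translate, using complementedness of $\BDM(\mathbf P)$ and the fact that an orthogonal family's join has a complement meeting $v$ trivially, into $u'\wedge v=0$ in $\BDM(\mathbf P)$ — here I would argue that if $u'\wedge v\ne 0$ it contains an element of $\mathbf P$ that could be adjoined to $S$ keeping it orthogonal and inside $LU(X)$, contradicting maximality (this is where pseudo-orthomodularity of $\mathbf P$ is used, exactly as in the atom-pushing argument of Theorem \ref{apfinch}, to stay inside $LU(X)$). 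Once $u\le v$ and $u'\wedge v=0$ are in hand, I invoke (SDC) with $B=S$ and $C=U(X)$: the condition $\bigwedge_{\mathbf P}\{g\mid g\in U(X)\text{ or }g'\in S\}=0$ is precisely the statement $u'\wedge v=0$ pushed down to $\mathbf P$, so (SDC) yields that every lower bound of $U(X)$ is under every upper bound of $S$, i.e.\ $\bigvee_{\BDM(\mathbf P)}S=\bigwedge_{\BDM(\mathbf P)}U(X)$, that is $u=v$, hence $LU(S)=LU(X)$. By Finch's theorem $\BDM(\mathbf P)$ is orthomodular.

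\textbf{Main obstacle.}
The delicate point is the bookkeeping in Direction 2 connecting ``$S$ maximal orthogonal in $LU(X)$'' with the algebraic identity $u'\wedge v=0$ in $\BDM(\mathbf P)$, and then re-expressing that identity as the meet-to-zero hypothesis of (SDC) phrased purely inside $\mathbf P$ with $B=S$, $C=U(X)$; one must be careful that $\{g\in P\mid g'\in S\}$ really represents $u'$ and that passing between $\BDM(\mathbf P)$ and $\mathbf P$ via density does not lose the maximality information. The use of pseudo-orthomodularity to guarantee that a would-be new orthogonal element of $S$ can be taken inside $LU(X)$ (rather than merely below $v$ in the completion) is the technical heart, and it parallels the argument already carried out in Theorem \ref{apfinch}.
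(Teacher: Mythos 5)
Your proof is correct, and your Direction 1 (orthomodular completion $\Rightarrow$ (SDC)) is essentially the paper's argument: translate the hypothesis into $u'\wedge v=0$ in $\BDM(\mathbf P)$ and apply the orthomodular law in the form ``$u\le v$ and $u'\wedge v=0$ imply $u=v$''. (One terminological slip there: $u'$ is not the $LU$-closure, i.e.\ the join, of $\{g\in P\mid g'\in B\}$, but its lower cone $\bigcap_{b\in B}L(b')$, the \emph{meet} of $\{b'\mid b\in B\}$ in $\BDM(\mathbf P)$; your subsequent use of it as a meet is the right one, so nothing breaks.) Your Direction 2, however, takes a genuinely different route. The paper verifies orthomodularity of $\BDM(\mathbf P)$ directly: given $X\subseteq Y$ in $\DM(\mathbf P)$ with $X'\wedge Y=0$, it applies (SDC) to $B=X$, $C=U(Y)$ and concludes $X=\bigvee B=\bigwedge C=Y$ --- no Finch criterion, no orthogonal sets, no Zorn's lemma. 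You instead verify Finch's condition from Remark \ref{finrem}: for a maximal orthogonal $S\subseteq LU(X)$ you show $u'\wedge v=0$ (correct: any nonzero $p\in P$ lying in $u'\wedge v$ satisfies $p\le s'$ for all $s\in S$ and $p\in LU(X)$, so it could be adjoined to $S$), and then feed $B=S$, $C=U(X)$ into (SDC). This works, but it is a detour, and the ``adjoining'' step needs no pseudo-orthomodularity at all, since the candidate $p$ already lies in $v=LU(X)$; indeed neither your argument nor the paper's uses pseudo-orthomodularity in this direction, which is consistent with Corollary \ref{corstr}: strong $D$-continuity alone forces the completion to be orthomodular, and pseudo-orthomodularity then follows from Theorem \ref{xTheorem4.5}. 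The paper's version buys brevity and self-containedness; yours makes the link with Finch's criterion explicit, at the cost of invoking maximal orthogonal subsets.
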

\begin{proof}\smartqed (1) Since $\BDM(\mathbf P)$ is a complemented lattice it is enough 
to check the following condition:
\begin{center}
if $X, Y\in \DM(\mathbf P)$, $X\subseteq Y$ and $X'\wedge Y=0$ then $X=Y$.
\end{center}
We put $B=X$ and $C=U(Y)$. Then $B\leq C$ and 
$\bigwedge\nolimits_{\mathbf P} \{d\in P \mid d\in C \text{ or } d'\in B\}=0$. 
We conclude  from  (SDC) that $X=\bigvee_{\BDM({\mathbf P})} B=\bigwedge_{\BDM({\mathbf P})} C=Y$. 
Hence $\BDM(\mathbf P)$ is orthomodular.

(2) Let $(\DM(\mathbf P),\wedge, \vee,{}',0,1)$ be the orthomodular 
Dedekind-MacNeille completion of  $\mathbf P$. It is enough to verify the following 
implication: 

\medskip
if $B,C\subseteq P$  are such that $B\le C$ then 
$\bigwedge\nolimits_{\mathbf P} \{g\in P \mid g\in C \text{ or } g'\in B\}=0$
 implies that $\{a\in P\mid a\le C\}\le\{d\in P\mid B\le d\}$
\medskip

\noindent{}from the condition (SDC). Let $X=\bigvee_{\BDM({\mathbf P})} B$ and 
$Y=\bigwedge_{\BDM({\mathbf P})} C$. Then $X\subseteq Y$ and $X'\wedge Y=0$. 
Since $\DM(\mathbf P)$ is  orthomodular  we obtain that $X=Y$. We conclude that  
$\{a\in P\mid a\le C\}\le\{d\in P\mid B\le d\}$, i.e., 
$\mathbf P$  is strongly $D$-continuous.
 \qed
\end{proof}

\begin{corollary} \label{corstr}
Every complemented strongly $D$-continuous poset is pseudo-ortho\-mo\-dular. 
Every finite pseudo-ortho\-mo\-dular poset is strongly $D$-continuous.
\end{corollary}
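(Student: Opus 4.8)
The plan is to obtain both halves of the corollary as short consequences of the characterization Theorem~\ref{xTheorem4.4}, of Theorem~\ref{xTheorem4.5}, and of the fact (Corollary~\ref{finchmhosum}) that a finite pseudo-orthomodular poset has an orthomodular Dedekind--MacNeille completion; no fresh manipulation of upper and lower cones should be required.

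For the first statement, let $\mathbf P=(P,\leq,{}',0,1)$ be a complemented poset which is strongly $D$-continuous. Here I would first record the observation that part~(1) of the proof of Theorem~\ref{xTheorem4.4} appeals to condition (SDC) only: to check the orthomodularity criterion ``$X\subseteq Y$ and $X'\wedge Y=0$ imply $X=Y$'' in the complemented lattice $\BDM(\mathbf P)$, one sets $B=X$ and $C=U(Y)$, notes $B\leq C$, rewrites the hypothesis $X'\wedge Y=0$ as $\bigwedge\nolimits_{\mathbf P}\{d\in P\mid d\in C\text{ or }d'\in B\}=0$, and then invokes (SDC) to conclude $X=\bigvee_{\BDM(\mathbf P)}B=\bigwedge_{\BDM(\mathbf P)}C=Y$. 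Thus strong $D$-continuity alone already forces $\BDM(\mathbf P)$ to be orthomodular, and Theorem~\ref{xTheorem4.5} then yields at once that $\mathbf P$ is pseudo-orthomodular.

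For the second statement, let $\mathbf P$ be a finite pseudo-orthomodular poset; since every pseudo-orthomodular poset is a poset with complementation, $\mathbf P$ is complemented. By Corollary~\ref{finchmhosum} (alternatively, since a finite poset is atomic and of finite rank, by Theorem~\ref{apfinch}) the lattice $\BDM(\mathbf P)$ is a complete orthomodular lattice, so the forward direction of Theorem~\ref{xTheorem4.4} gives that $\mathbf P$ is strongly $D$-continuous. The only point that needs a little care --- and it is a matter of reading the earlier proof rather than of any real difficulty --- is justifying that the first part of the argument for Theorem~\ref{xTheorem4.4} does not secretly use pseudo-orthomodularity; once that is granted, both assertions of the corollary are immediate.
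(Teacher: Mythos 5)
Your proposal is correct and follows exactly the derivation the paper intends: the corollary is stated without proof as an immediate consequence of Theorems~\ref{xTheorem4.4} and \ref{xTheorem4.5} and Corollary~\ref{finchmhosum}, and your reading that part~(1) of the proof of Theorem~\ref{xTheorem4.4} uses only condition (SDC) (so that strong $D$-continuity alone yields orthomodularity of $\BDM(\mathbf P)$, whence pseudo-orthomodularity via Theorem~\ref{xTheorem4.5}) is precisely the point that makes the first assertion go through. The second assertion is likewise handled as the paper intends, via the finite case of the orthomodular completion result and the forward direction of Theorem~\ref{xTheorem4.4}.
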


Similarly as for  finite  pseudo-orthomodular posets we have the following 
theorem.

\begin{theorem}\label{AllEmbnied}
 {\em Embedding  theorem  for  strongly $D$-continuous  
 pseudo-orthomodular posets.}  
   Strongly $D$-continuous  pseudo-orthomodular posets are precisely  comple\-ment-closed and doubly dense subsets of complete   orthomodular lattices. 
  \end{theorem}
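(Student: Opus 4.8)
The plan is to deduce both halves of the equivalence from Theorem~\ref{xTheorem4.4}, which describes the complemented posets whose Dedekind-MacNeille completion is orthomodular, together with Remark~\ref{ddcd} and Schmidt's characterization of $\BDM(\mathbf P)$.

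First I would show that every strongly $D$-continuous pseudo-orthomodular poset $\mathbf P$ arises as asserted. By Definition~\ref{xDefinition2.1}, $\mathbf P$ is in particular complemented, so Theorem~\ref{xTheorem4.4} applies and $\BDM(\mathbf P)$ is a complete orthomodular lattice. By Remark~\ref{ddcd}, the canonical image of $P$ inside $\BDM(\mathbf P)$ is a complement-closed and doubly dense subset. Hence $\mathbf P$ is, up to isomorphism, a complement-closed and doubly dense subset of a complete orthomodular lattice.

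Conversely, let $\mathbf P=(P,\leq,{}',0,1)$ be a complement-closed and doubly dense subset of a complete orthomodular lattice $\mathbf L$. By Remark~\ref{ddcd}, $\mathbf P$ with the order and complementation induced from $\mathbf L$ is a complemented poset. Condition~(i) of Definition~\ref{doubly} says exactly that the inclusion $P\hookrightarrow L$ is supremum-dense and infimum-dense (take $M=L(a)\cap P$ and $Q=U(a)\cap P$ for $a\in L$), so by Schmidt \cite{Schmidt} the complete lattice $\mathbf L$ is isomorphic to $\BDM(\mathbf P)$ via the identity on $P$. I would then upgrade this to an isomorphism of complete lattices \emph{with orthocomplementation}: the orthocomplementation of $\mathbf L$ restricts to ${}'$ on $P$ because $P$ is complement-closed, the canonical orthocomplementation of $\BDM(\mathbf P)$ (provided by \cite{Laren}, cf.\ Remark~\ref{ddcd}) restricts to ${}'$ on $P$ by construction, and both are antitone involutions, hence order-anti-automorphisms sending joins to meets; since $P$ is join-dense in each of these lattices, an order-anti-automorphism of either is completely determined by its restriction to $P$, so the two orthocomplementations correspond under the Schmidt isomorphism. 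Therefore $\BDM(\mathbf P)$ is a complete orthomodular lattice, and Theorem~\ref{xTheorem4.4} yields that $\mathbf P$ is a strongly $D$-continuous pseudo-orthomodular poset.

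The only delicate point is the last step of the converse: identifying $\mathbf L$ and $\BDM(\mathbf P)$ merely as complete lattices would not suffice to invoke Theorem~\ref{xTheorem4.4}, so one really must check that the isomorphism transports the orthocomplementation correctly --- and this is exactly where the De Morgan behaviour of antitone involutions, together with the double density of $P$ in $\mathbf L$, is used. Everything else is a direct application of results already at hand.
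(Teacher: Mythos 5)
Your proposal is correct, and the forward direction coincides with the paper's (Theorem~\ref{xTheorem4.4} plus Remark~\ref{ddcd}). The converse, however, is organized differently. The paper never identifies $\mathbf L$ with $\BDM(\mathbf P)$: it verifies the nontrivial half of (SDC) directly inside $\mathbf L$ by setting $X=\bigvee_{\mathbf L}B$ and $Y=\bigwedge_{\mathbf L}C$, using double density of $P$ to see that $X'\wedge Y=0$ (any $u\in P$ below $X'\wedge Y$ lies below every $g\in C$ and every $g$ with $g'\in B$, hence $u=0$), invoking orthomodularity of $\mathbf L$ to get $X=Y$, and then appealing to Corollary~\ref{corstr} for pseudo-orthomodularity. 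You instead transport everything to $\BDM(\mathbf P)$: Schmidt's theorem identifies $\mathbf L$ with $\BDM(\mathbf P)$ as complete lattices, and you correctly note --- and this is the one point that genuinely needs an argument --- that the orthocomplementations must also be matched; your density argument (an antitone involution of a complete lattice turns joins into meets, hence is determined by its values on the join-dense set $P$, where both involutions agree because $P$ is complement-closed) does establish this, after which Theorem~\ref{xTheorem4.4} applies wholesale. Both routes ultimately rest on the orthomodularity of $\mathbf L$ and the double density of $P$; yours is more conceptual and reusable (it shows once and for all that any complete ortholattice in which $P$ sits complement-closed and doubly densely \emph{is} the orthocomplemented Dedekind-MacNeille completion of $\mathbf P$), while the paper's is more elementary and avoids having to discuss how the orthocomplementation behaves under the Schmidt isomorphism.
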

  \begin{proof} \smartqed From Theorem \ref{xTheorem4.4} we know 
  that every  strongly $D$-continuous pseudo-ortho\-modular poset is 
  a  complement-closed and doubly dense subset in 
  its  orthomodular Dedekind-MacNeille completion. Conversely, let 
  $\mathbf P=(P,\leq,{}',0,1)$ be  a  complement-closed and doubly dense subset of 
  a  complete  orthomodular lattice $(L,\wedge, \vee, {}',0,1)$.  Then 
  $\mathbf P$ is a complemented poset. Let us show that $\mathbf P$ is 
   strongly $D$-continuous. As in Theorem \ref{xTheorem4.4}  it 
   is enough to verify the following 
implication: 

\medskip
if $B,C\subseteq P$  are such that $B\le C$ then 
$\bigwedge\nolimits_{\mathbf P} \{g\in P \mid g\in C \text{ or } g'\in B\}=0$
 implies that $\{a\in P\mid a\le C\}\le\{d\in P\mid B\le d\}$ 
\medskip

\noindent{}from the condition (SDC).  Let $X=\bigvee_{\mathbf L} B$ and 
$Y=\bigwedge_{\mathbf L} C$. Then $X\leq Y$ and $X'\wedge Y=0$ 
(since $u\in P, u\leq X'\wedge Y$ implies $u\leq g$ for all 
$g\in P$ such that  $g\in C \text{ or } g'\in B$, i.e., $u=0$). 
Since $\mathbf L$ is  orthomodular  we obtain that $X=Y$. 
Now, let $a\in P, a\le C$ and $d\in P, B\le d$. Then $a\leq Y=X\leq d$, 
i.e.,  $\mathbf P$ is  strongly $D$-continuous and from Corollary 
\ref{corstr} we have that is also pseudo-ortho\-modular.\qed
  \end{proof}

\begin{acknowledgement}
Support of the research of the first two authors by \"OAD, project CZ~04/2017, and 
of the first author by IGA, project P\v rF~2018~012, and of the second author 
by the Austrian Science Fund (FWF), project I~1923-N25, is gratefully acknowledged.
Research of the third author was supported by the project New approaches to aggregation operators in analysis and processing 
of data, Nr.~18-06915S by Czech Grant Agency (GA\v{C}R). 
\end{acknowledgement}
%

%%%%%%%%%%%%%%%%%%%%%%%% referenc.tex %%%%%%%%%%%%%%%%%%%%%%%%%%%%%%
% sample references
% %
% Use this file as a template for your own input.
%
%%%%%%%%%%%%%%%%%%%%%%%% Springer-Verlag %%%%%%%%%%%%%%%%%%%%%%%%%%
%
% BibTeX users please use
% \bibliographystyle{}
% \bibliography{}
%

\end{document}